\newtheorem{theorem}{Theorem}[section]
\newtheorem{proposition}[theorem]{Proposition}
\newtheorem{corollary}[theorem]{Corollary}
\newtheorem{lemma}[theorem]{Lemma}
\newtheorem{example}[theorem]{Example}
\newtheorem{remark}[theorem]{Remark}
\newtheorem{definition}{Definition}[section]
\newcommand{\Ga}{\Gamma}
\newcommand{\Cay}{\textrm{Cay}}
\newcommand{\Sym}{\textrm{Sym}}
\def\Sym{{\rm Sym}}
\def\ov{\overline}
\def\Cay{{\rm Cay}}
\def\la{\langle}
\def\ra{\rangle}
\def\Aut{{\rm Aut}}
\def\calA{\mathcal{A}}
\def\caycs{{\sf NormCay{CS}}}
\def\Ga{\Gamma}
\def\vp{\varphi}
\def\GL{{\rm GL}}
\def\ra{\rangle}
\begin{document}

\title[Normal  edge-transitive Cayley graphs]
{Normal edge-transitive Cayley graphs and\\  Frattini-like subgroups}

\author[Behnam Khosravi \ \and Cheryl E. Praeger]
{$^1$ Behnam Khosravi and  $^2$ Cheryl E. Praeger}
\address{$^1$   Department of Mathematics,
	Institute for Advanced Studies in Basic Sciences (IASBS), Zanjan
	45137-66731, Iran.  \\ email \href{mailto:behnam\_kho@yahoo.com}{behnam\_kho@yahoo.com} \newline $^2$ Department of Mathematics and Statistics, The University of Western Australia, 35 Stirling Highway, Perth, WA 6009, Australia,
	email \href{mailto:cheryl.praeger@uwa.edu.au}{cheryl.praeger@uwa.edu.au}.}

\subjclass[2000]{Primary: 05C25; Secondary:  08A30; 08A35.}

\keywords{Normal edge-transitive Cayley graphs, Frattini subgroup, graph constructions, Automorphisms of groups}

\thanks{Dedicated to the memory of our colleague and friend Jan Saxl}

\thanks{The second author would like to thank the Isaac Newton Institute for Mathematical Sciences, Cambridge, for support and hospitality during the programme \emph{Groups, representations and applications: new perspectives} (supported by EPSRC grant no. EP/R014604/1), where work on this paper was undertaken.
}

\begin{abstract}
	For a finite group $G$ and an inverse-closed generating set $C$ of $G$, let
	$\Aut(G;C)$ consist of those automorphisms of $G$ which leave
	$C$ invariant.
	We define an $\Aut(G;C)$-invariant normal subgroup $\Phi(G;C)$ of $G$ which has the
	 property  that, for any
	$\Aut(G;C)$-invariant normal set of generators for $G$, if we remove from it all the elements of
	$\Phi(G;C)$, then the remaining set is still an $\Aut(G;C)$-invariant normal
	generating  set  for $G$. The subgroup $\Phi(G;C)$ contains the Frattini subgroup
	$\Phi(G)$
	but the inclusion may be proper.
	The Cayley graph $\Cay(G,C)$ is normal edge-transitive if $\Aut(G;C)$
	acts transitively on the
	pairs $\{c,c^{-1}\}$ from $C$.
	We show that, for a normal edge-transitive   Cayley
	graph $\Cay(G,C)$, its quotient modulo $\Phi(G;C)$ is the unique largest normal
	quotient which is isomorphic to a subdirect product of normal edge-transitive
	graphs of characteristically simple groups.  In particular, we may therefore view normal
	edge-transitive Cayley graphs of characteristically simple groups as building blocks
	for normal edge-transitive Cayley graphs whenever we have $\Phi(G;C)$
	 trivial. We explore several questions which these results raise, some concerned with the set of all inverse-closed generating sets for groups in a given family. In particular we use this theory to classify all $4$-valent normal edge-transitive Cayley graphs for dihedral groups; this involves a new construction of an infinite family of examples, and disproves a conjecture of Talebi.
\end{abstract}

\maketitle
\section{Introduction and the main results}

Let $G$ be a nontrivial finite group (that is, $|G|>1$), and let $\Aut(G)$ denote its automorphism group. A non-empty  subset $C$ of  $G\setminus\{1\}$ is said to be \emph{inverse-closed} if for each $c\in C$, its inverse $c^{-1}$ also lies in $C$.
We will be concerned with inverse-closed generating sets $C$ for $G$, and for such a set we let $\Aut(G;C)$ denote the subgroup of $\Aut(G)$ consisting of those automorphisms
$\sigma$  which leave	$C$ invariant, that is to say, $C^\sigma=C$. A subgroup $N\leq G$ is $\Aut(G;C)$-invariant if $N^\sigma=N$ for each $\sigma\in\Aut(G;C)$. We will study
\begin{equation}\label{eq:agc}
\calA(G;C) =\{ N\mid N\unlhd G, \ N\ne G, \ \mbox{and $N$ is  $\Aut(G;C)$-invariant}\}
\end{equation}
and in particular its subset of maximal elements
\begin{equation}\label{eq:amax}
\calA_{\max}(G;C) =\{ N\in\calA(G;C) \mid \ \mbox{$N$ is maximal by inclusion}\}.
\end{equation}
Since $|G|>1$, the identity subgroup lies in $\calA(G;C)$, and it follows that $\calA_{\max}(G;C)$ is non-empty. Thus the following subgroup
\begin{equation}\label{eq:phi}
\Phi(G;C) = \bigcap\{N \mid N\in\calA_{\max}(G;C)\}
\end{equation}
is a well defined element of $\calA(G;C)$.  We may have $\Phi(G;C)=1$; this is the case, for example, if $G$ is simple so that $\calA_{\max}(G;C)=\calA(G;C)=\{\{1\}\}$.
The definition of $\Phi(G;C)$ as the intersection of a certain family of subgroups of $G$ is reminiscent of the definition of the Frattini subgroup $\Phi(G)$ as the intersection of all maximal (proper) subgroups of $G$. Indeed $\Phi(G;C)$ contains $\Phi(G)$
(see Lemma~\ref{lem:frat}), but the inclusion can be proper (see Examples~\ref{ex:frat} and~\ref{ex:frat2}).

An important property of the Frattini subgroup $\Phi(G)$ is that, if $X$ is a generating set for $G$ and $X$ contains an element $y\in\Phi(G)$, then $X\setminus\{y\}$ still generates $G$ (see, for example, \cite[Satz III.3.2]{Huppert}). It turns out that $\Phi(G;C)$ has a similar property. We say that a subset $X$ of $G$ is \emph{normal $C$-closed} if for each $x\in X, g\in G, \sigma\in\Aut(G; C)$, the image $(x^g)^\sigma$ also lies in $X$. Clearly an arbitrary subset $X\subseteq G$ may be enlarged to a normal $C$-closed subset by adding to it each of the elements $(x^g)^\sigma$; the set thus obtained is called the \emph{normal $C$-closure} of $X$. We show that from each normal $C$-closed generating set we may remove all elements of $\Phi(G; C)$ (if any) and maintain these properties.

\begin{theorem}\label{thm:frat}
Let $G$ be a finite group and $C$ a generating set for $G$.
\begin{enumerate}
\item[(a)] If $X$ is a normal $C$-closed generating set for $G$, then $X\setminus (X\cap \Phi(G;C))$ is also a normal $C$-closed generating set.
\item[(b)] The subgroup $\Phi(G;C)$ is the set of all elements $y\in G$ which satisfy the following property:

\begin{center}
for $X\subseteq G$, if the normal $C$-closure of $X\cup\{y\}$ generates $G$,\\
then the normal $C$-closure of $X$ also generates $G$.
\end{center}
\end{enumerate}
\end{theorem}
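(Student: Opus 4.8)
The plan is to treat part (a) as the engine and to derive part (b) from it. The key structural fact I would establish first is that any normal $C$-closed subset $S$ of $G$ generates a subgroup $\la S\ra$ that is both normal in $G$ and $\Aut(G;C)$-invariant: for $g\in G$ and $\sigma\in\Aut(G;C)$, the operation $x\mapsto (x^g)^\sigma$ permutes $S$, so conjugation by $g$ and application of $\sigma$ each fix $\la S\ra$ setwise. Consequently, if such a subgroup $\la S\ra$ is proper then it lies in $\calA(G;C)$ and is therefore contained in some $M\in\calA_{\max}(G;C)$. I would also record two routine facts about the normal $C$-closure: it is additive over unions (the closure of $X\cup\{y\}$ is the union of the closures of $X$ and $\{y\}$), and since $\Phi(G;C)$ is a normal $\Aut(G;C)$-invariant subgroup it is itself normal $C$-closed, so the closure of $\{y\}$ is contained in $\Phi(G;C)$ whenever $y\in\Phi(G;C)$.

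For part (a), first I would check that $X':=X\setminus(X\cap\Phi(G;C))$ is still normal $C$-closed: because $\Phi(G;C)$ is normal and $\Aut(G;C)$-invariant, one has $x\in\Phi(G;C)\iff (x^g)^\sigma\in\Phi(G;C)$, so the operation $x\mapsto (x^g)^\sigma$ preserves both membership and non-membership in $\Phi(G;C)$, and deleting the $\Phi(G;C)$-elements respects closure. Then I would argue $\la X'\ra=G$ by contradiction: if $H:=\la X'\ra\ne G$, then $H\in\calA(G;C)$ by the structural fact above, so $H\le M$ for some $M\in\calA_{\max}(G;C)$; since also $\Phi(G;C)\le M$, we obtain $X=X'\cup(X\cap\Phi(G;C))\subseteq M$, whence $G=\la X\ra\le M$, contradicting $M\ne G$.

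For part (b), let $P$ denote the set of $y$ with the stated non-generator property. To show $\Phi(G;C)\subseteq P$, take $y\in\Phi(G;C)$ and write $\ov X$ and $\ov Y$ for the normal $C$-closures of $X$ and $X\cup\{y\}$; since the closure of $\{y\}$ lies in $\Phi(G;C)$, additivity gives $\ov Y\setminus(\ov Y\cap\Phi(G;C))=\ov X\setminus(\ov X\cap\Phi(G;C))$. If $\ov Y$ generates $G$, then by part (a) so does $\ov Y\setminus(\ov Y\cap\Phi(G;C))$, hence so does $\ov X$, so $y\in P$. For the reverse inclusion I would argue contrapositively: if $y\notin\Phi(G;C)$, choose $M\in\calA_{\max}(G;C)$ with $y\notin M$ and take $X=M$. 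Then $M$ is already normal $C$-closed and proper, so its closure, namely $M$, does not generate $G$; whereas the normal $C$-closure of $M\cup\{y\}$ generates a normal $\Aut(G;C)$-invariant subgroup $K$ with $M\subsetneq K$, and maximality of $M$ forces $K=G$. Thus $y\notin P$.

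The only real obstacle is the bookkeeping in part (b), specifically the identity $\ov Y\setminus(\ov Y\cap\Phi(G;C))=\ov X\setminus(\ov X\cap\Phi(G;C))$, which rests on $\Phi(G;C)$ being normal $C$-closed and on additivity of the normal $C$-closure over unions. Once these are in place, both inclusions reduce to the same maximality argument already used for part (a), with the witness $X=M$ providing the separating instance in the harder, reverse direction.
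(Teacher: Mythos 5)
Your proof is correct and follows essentially the same route as the paper's: part (a) via the observation that $\Phi(G;C)$ is normal $C$-closed, so $Y=X\setminus(X\cap\Phi(G;C))$ generates a normal $\Aut(G;C)$-invariant subgroup that cannot lie in any $N\in\calA_{\max}(G;C)$, and part (b) via the same two inclusions, with the witness $X=N$ for a maximal $N$ not containing $y$. The only differences are cosmetic: you phrase the inclusion $S\subseteq\Phi(G;C)$ contrapositively where the paper argues directly, and you spell out the additivity identity for closures that the paper compresses into the phrase ``by part (a)''.
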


We will show that this Frattini-like subgroup $\Phi(G;C)$ plays a key role in describing the structure of normal edge-transitive Cayley graphs.

\emph{We briefly summarise the key concepts:}
for a group $G$, and a non-empty, inverse-closed subset $C\subseteq G\setminus\{1\}$, the
(undirected)
 {\it Cayley graph }$\Cay(G; C)$ {\it of $G$ with respect  to $C$} is the graph with
vertex set $G$ and edge set $E(G;C)$ consisting of all
pairs $\{g, h\}$ such that  $gh^{-1}\in C$ (or equivalently $hg^{-1}\in C$ since $C$ is inverse-closed). The graph $\Gamma:=\Cay(G; C)$ is connected if and only if the {\it connection set} $C$ generates $G$. For each
$g\in G$, the right multiplication map $\rho_g:h\mapsto  hg$ is an automorphism of
$\Gamma$, and the group $G_R:=\{\rho_g\mid g\in G\}$ (called {\it the right regular representation of $G$}) is a subgroup of $\Aut(\Gamma)$ acting regularly on vertices (that is, $G_R$ is transitive with trivial stabilisers).  In addition $\Aut(G;C)$ is a subgroup of $\Aut(\Gamma)$, and the semidirect product $N(G;C):= G_R\rtimes\Aut(G;C)$ is the full normaliser of $G_R$ in $\Aut(\Gamma)$ (see \cite{Godsil-auto}). As in
 \cite{praeger}, $\Gamma$ is said to be  {\it normal edge-transitive} if $N(G;C)$
is transitive on the edge-set $E(G;C)$. In \cite[Theorem 3]{praeger} it was shown that, for each $N\in\calA(G;C)$, the normal quotient, modulo $N$, of a connected normal edge-transitive Cayley graph $\Cay(G; C)$ is again a connected normal edge-transitive Cayley graph, namely
$\Cay(G/N; CN/N)$. In particular if $N\in\calA_{\max}(G;C)$, then the group $G/N$ is characteristically simple, and we refer to $\Cay(G/N; CN/N)$ as a \emph{characteristically simple normal quotient} of $\Cay(G;C)$.

We denote  the set of connected  normal edge-transitive Cayley graphs for finite characteristically simple groups by $\caycs$.
A (still open) question from \cite[Question 2]{praeger} asks
under what conditions $\Cay(G; C)$ is determined by its proper normal quotients (that is, quotients modulo subgroups  $N\in\calA_{\max}(G;C)$ with $N\ne 1$). Our results suggest that, provided  $\Phi(G;C)=1$, $\Cay(G; C)$ is determined to a large extent by its normal quotients lying in $\caycs$.
Our main result relies on certain graph constructions which we define in Subsection~\ref{sub:graphnot}, namely the direct product $\prod_{i=1}^n\Ga_i$ of a family of graphs $\Ga_i$, and a full subdirect product of a graph direct product. A different graph theoretical construction was used recently in \cite{KKK} to study normal edge-transitive Cayley graphs $\Cay(G;C)$ of groups $G$ admitting a direct decomposition $G=M\times N$ with both $M, N\in  \calA(G;C)$, but the analysis in \cite{KKK} does not apply in our general situation.

\begin{theorem}\label{main decomposition}
	Let $G$ be a nontrivial finite group with inverse-closed generating set  $C$ such that $\Ga=\Cay(G,C)$ is normal edge-transitive, and let $\Phi:=\Phi(G;C)$ (as in $\eqref{eq:phi}$). Then
	there exist $k\geq 1$ and $N_1,\dots,N_k\in\calA_{\max}(G;C)$ for which the following conditions hold, where $\Ga_i:= \Cay(G/N_i;CN_i/N_i)$.
	\begin{enumerate}[{\rm (i)}]
		\item $\Phi=\cap_{i=1}^kN_i$, and the map $\zeta: g\Phi\rightarrow (gN_1,\cdots,gN_k)$ is an isomorphism $G/\Phi \rightarrow \prod_{i=1}^k G/N_i$;
		\item  for each $i\leq k$, $\Ga_i\in \caycs$, $\calA(G/N_i; CN_i/N_i)=\{1\}$, and $\zeta$ induces a graph isomorphism from $\Cay(G/\Phi; C\Phi/\Phi)$ onto a full subdirect product of $\prod_{i=1}^k\Ga_i$.
	\end{enumerate}
\end{theorem}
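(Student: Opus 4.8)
The plan is to establish (i) by a greedy, irredundant selection from $\calA_{\max}(G;C)$ that assembles a genuine direct product decomposition, and then to read off (ii) from the correspondence theorem together with the elementary fact that a group isomorphism induces an isomorphism of the associated Cayley graphs. Write $\calA_{\max}(G;C)=\{M_1,\dots,M_r\}$, so that $\Phi=\bigcap_{j=1}^r M_j$ by \eqref{eq:phi}. I would construct the $N_i$ recursively: put $N_1=M_1$ and $P_1=N_1$; having chosen $N_1,\dots,N_i$ and set $P_i:=\bigcap_{l\le i}N_l$, stop if $P_i=\Phi$, and otherwise choose $N_{i+1}=M_j$ for some $j$ with $P_i\not\subseteq M_j$. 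Such a $j$ exists whenever $P_i\neq\Phi$, because each $N_l$, being one of the $M_j$, contains $\Phi$, so $P_i\supseteq\Phi=\bigcap_j M_j$, and $P_i\subseteq M_j$ for all $j$ would force $P_i=\Phi$. Since $P_{i+1}\subsetneq P_i$ at each step (as $P_i\not\subseteq N_{i+1}$), the recursion terminates, and at termination $P_k\subseteq M_j$ for every $j$, whence $P_k=\Phi$; this gives the first assertion $\Phi=\bigcap_{i=1}^kN_i$, with $k\ge 1$.

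The decisive point, and the place where maximality is genuinely used, is that each chosen pair is co-maximal: $P_i$ and $N_{i+1}=M_j$ are both $\Aut(G;C)$-invariant normal subgroups, hence so is $P_iN_{i+1}$, and it properly contains the $\calA_{\max}(G;C)$-element $M_j$; by maximality $P_iN_{i+1}=G$. Feeding this into the standard two-factor fact that $G/(A\cap B)\cong G/A\times G/B$ precisely when $AB=G$ (for normal $A,B$), an induction on $i$ shows that the natural map $G/P_i\to\prod_{l\le i}G/N_l$ is an isomorphism; at $i=k$ this is exactly the statement that $\zeta$ is an isomorphism $G/\Phi\to\prod_{i=1}^kG/N_i$. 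Injectivity of $\zeta$ is automatic from $\bigcap N_i=\Phi$; the real content is surjectivity, that is, that the subdirect product is the full direct product, and this is what co-maximality supplies. This completes (i).

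For (ii), the properties of each factor come from the cited quotient theory plus the correspondence theorem. Since $CN_i/N_i$ generates $G/N_i$ the graph $\Ga_i$ is connected, and it is normal edge-transitive with $G/N_i$ characteristically simple by \cite[Theorem~3]{praeger} and the maximality of $N_i$, so $\Ga_i\in\caycs$. To see $\calA(G/N_i;CN_i/N_i)=\{1\}$, note that each $\sigma\in\Aut(G;C)$ fixes $N_i$ and so induces $\bar\sigma\in\Aut(G/N_i;CN_i/N_i)$; let $A_i$ be the resulting image subgroup. By the correspondence theorem the $A_i$-invariant proper normal subgroups of $G/N_i$ correspond to the $\Aut(G;C)$-invariant normal subgroups $K$ with $N_i\le K\lneq G$, and by maximality of $N_i$ the only such $K$ is $N_i$; since $\Aut(G/N_i;CN_i/N_i)\ge A_i$, a fortiori $\calA(G/N_i;CN_i/N_i)=\{1\}$.

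Finally, because any group isomorphism carries a Cayley graph onto the Cayley graph of the image connection set, the isomorphism $\zeta$ of (i) is simultaneously a graph isomorphism from $\Cay(G/\Phi;C\Phi/\Phi)$ onto $\Cay(\prod_i G/N_i;D)$, where $D=\{(cN_1,\dots,cN_k)\mid c\in C\}$. The projection of $D$ to the $i$th coordinate is exactly $CN_i/N_i$, so $D$ is inverse-closed, generates $\prod_i G/N_i$, and surjects onto each factor connection set; this is precisely the assertion that $\Cay(\prod_i G/N_i;D)$ is a full subdirect product of $\prod_{i=1}^k\Ga_i$ in the sense of Subsection~\ref{sub:graphnot}, completing (ii). I expect the main obstacle to be the co-maximality step: it is what upgrades the automatic subdirect embedding into a full direct decomposition, and it is the one place where the maximality of the $N_i$ within $\calA(G;C)$ is essential rather than decorative.
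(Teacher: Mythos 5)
Your proposal is correct, but it reaches the theorem by a genuinely different route than the paper. The paper does not prove Theorem~\ref{main decomposition} directly: it first proves the more general Theorem~\ref{main} by induction on $|G|$ (choosing $N_1\in\calA_{\max}(G;C)$, finding $H\in\calA(G;C)$ with $N_1H=G$, and pulling back a decomposition of $G/H$), obtaining parts (i)--(ii) for various intersections $X=\cap_i N_i$, and then proves in part (iii), by a minimality argument, that the unique smallest admissible $X$ is $\Phi(G;C)$; Theorem~\ref{main decomposition} is then read off from Theorem~\ref{main}(iii) together with the same correspondence-theorem argument you give for $\calA(G/N_i;CN_i/N_i)=\{1\}$. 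Your greedy irredundant selection replaces all of this: the stepwise co-maximality $P_iN_{i+1}=G$, forced because $P_iN_{i+1}$ is an $\Aut(G;C)$-invariant normal subgroup properly containing a member of $\calA_{\max}(G;C)$, is exactly the maximality trick the paper deploys in its proof of Theorem~\ref{main}(iii) (there in the form $NX=G$), but you use it to assemble the decomposition of $G/\Phi$ directly, avoiding both the induction on $|G|$ and the detour through general $X$. What each approach buys: yours is shorter and self-contained for the stated theorem; the paper's buys Theorem~\ref{main} itself, i.e.\ decompositions relative to other intersections and the uniqueness/minimality characterisation of $\Phi(G;C)$, which the paper states as a result of independent interest. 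One small point you should make explicit in the final step: you need $C\cap N_i=\emptyset$ for each $i$, so that every coordinate $cN_i$ of an element of $D$ is a nonidentity element of $G/N_i$ and edges of $\Cay(\prod_i G/N_i;D)$ project to edges, rather than loops, in each $\Ga_i$ --- without this the graph would not even be a subgraph of the direct product $\prod_i\Ga_i$. This follows from transitivity of $C$ (Definition~\ref{def:trans}) by the same argument as Lemma~\ref{lem:notinphi}(a) applied to the proper invariant subgroup $N_i$, and the paper elides it at the same level of rigour by citing \cite[Theorem 3]{praeger}; so it is a remark to add, not a gap.
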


\begin{remark}{\rm
Theorem~$\ref{main decomposition}$ constructs a normal quotient  $\Cay(G/\Phi(G;C); C\Phi(G;C)/\Phi(G;C))$ of a given normal edge-transitive Cayley graph $\Cay(G;C)$
which is a full subdirect product of a family of its normal quotients lying in $\caycs$. If $\Phi(G;C)=1$ this gives a structural description of the original graph $\Cay(G;C)$.
}
\end{remark}

Theorem~\ref{main decomposition} follows fairly easily from a more general result Theorem~\ref{main} which considers general normal quotients in $\caycs$.
We prove Theorem~\ref{thm:frat} in Section~\ref{sec:frat}, and Theorem~\ref{main decomposition} in Section~\ref{sec:netg}.
In Section~\ref{sec:dehedral}, we work through this theory for dihedral groups, examining in Theorem~\ref{lem:d2n} the possible structures of inverse-closed generating sets for such groups which are transitive in the sense of Definition~\ref{def:trans}. By \cite[Proposition 1(b)]{praeger}, these are precisely the connection sets which lead to normal edge-transitive Cayley graphs. In particular we classify all $4$-valent normal edge-transitive Cayley graphs for dihedral groups.

\begin{theorem}\label{4valent}
Let $G=D_{2n}$ with $n\geq3$, and let $C$ be an inverse-closed generating set for $G$ with $|C|=4$. Then  $\Cay(G;C)$ is a $4$-valent normal edge-transitive Cayley graph if and only if, replacing $C$ by $C^\sigma$ for some $\sigma\in\Aut(G)$ if necessary, $C$ is as in Example~$\ref{ex:talebi}$ or Example~$\ref{ex:new}$.
\end{theorem}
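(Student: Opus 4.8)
My plan is to reduce the classification to a purely arithmetic question about $4$-element subsets of $\mathbb{Z}_n$, and then to solve that question by analysing the regular subgroup of the induced symmetry group. The starting point is the criterion supplied by \cite[Proposition 1(b)]{praeger} and Definition~\ref{def:trans}: $\Cay(G;C)$ is normal edge-transitive exactly when $\Aut(G;C)$ is transitive on the set of pairs $\{c,c^{-1}\}$ with $c\in C$. This is because right multiplication preserves the difference $gh^{-1}$ up to inversion, so the orbits of $G_R$ on the edge set are indexed by these pairs, and $\Aut(G;C)$ permutes those orbits precisely as it permutes the pairs; hence $N(G;C)=G_R\semidirect\Aut(G;C)$ is edge-transitive if and only if $\Aut(G;C)$ is transitive on the pairs.

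First I would exploit the structure of $\Aut(D_{2n})$. For $n\ge 3$ the rotation subgroup $\la r\ra$ is characteristic, so every automorphism preserves the partition of $G$ into rotations and reflections; concretely $\Aut(D_{2n})\cong\mathbb{Z}_n\semidirect\mathbb{Z}_n^\times$ acts on the rotation exponents by multiplication and on the reflection exponents as the affine group $\AGL(1,n)$. Consequently the $\Aut(G;C)$-orbits on the pairs $\{c,c^{-1}\}$ never mix rotations with reflections, so transitivity forces all four pairs to have the same type. Since a generating set must contain a reflection (the rotations alone generate only $\la r\ra\ne G$), all four elements of $C$ are reflections, hence involutions, and each pair is a singleton. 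Writing $C=\{sr^{i}:i\in S\}$ for a $4$-subset $S\subseteq\mathbb{Z}_n$, normal edge-transitivity becomes the condition that the setwise stabiliser of $S$ in $\AGL(1,n)$ acts transitively on $S$, while the hypothesis that $C$ generates $G$ becomes $\gcd(\{i-j:i,j\in S\}\cup\{n\})=1$ (since $sr^{i}\,sr^{j}=r^{j-i}$). This is exactly the specialisation to $|C|=4$ of the structural description in Theorem~\ref{lem:d2n}. I would also record that replacing $C$ by $C^\sigma$ corresponds to replacing $S$ by its image under an affine map, so I am free to normalise, e.g. to take $0\in S$.

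The heart of the proof is the classification of such $S$. The stabiliser induces a transitive subgroup $\overline H\le\Sym(S)\cong\Sym(4)$; since $\AGL(1,n)$ is metabelian, $\overline H$ has derived length at most $2$, which already excludes $\Sym(4)$, and every transitive subgroup of $\Sym(4)$ contains a regular subgroup isomorphic to $\mathbb{Z}_4$ or to $\mathbb{Z}_2\times\mathbb{Z}_2$. I would therefore split into two cases according to which regular subgroup occurs, realise its generators as explicit affine maps $x\mapsto ax+b$, and solve for $S$. In the cyclic case $S$ is forced to be a translated, scaled orbit $x_0+y_0\la a\ra$ of an order-$4$ multiplier $a\in\mathbb{Z}_n^\times$, together with the fixed-point-free realisation in which $a$ has order $2$ and is combined with the translation of additive order $2$; in the Klein case $S$ is forced to be interchanged by a fixed-point-free affine involution of $\mathbb{Z}_2\times\mathbb{Z}_2$-type. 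Imposing the gcd condition then cuts these candidates down to precisely the parametrised families of Examples~\ref{ex:talebi} and~\ref{ex:new}, up to the affine normalisation already permitted.

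The main obstacle I anticipate is exactly this last case analysis: enumerating all affine realisations of a regular $\mathbb{Z}_4$ or $\mathbb{Z}_2\times\mathbb{Z}_2$ action (including the fixed-point-free ``glide'' realisations and the degenerate overlaps where the full stabiliser is larger, such as $D_8$ or $A_4$, or where non-$\pm 1$ involutions arise from extra $2$-torsion of $\mathbb{Z}_n^\times$), and carefully tracking the resulting divisibility constraints on $n$ so that the two families emerge cleanly and with no omission. Completeness is what matters here, since it is what guarantees that the two families exhaust all examples; in particular it is the genuinely new family (Example~\ref{ex:new}) that contradicts Talebi's conjectured list. For the converse direction I would simply verify, for each of the two example families, that the prescribed affine maps stabilise $S$ and act transitively on it and that the difference condition holds, which is the routine checking carried out where the examples are defined.
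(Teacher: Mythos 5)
Your overall strategy is the paper's, up to a reorganisation of the case division. The reduction you begin with --- normal edge-transitivity is equivalent to $C\in\mathcal{T}(G)$ by \cite[Proposition 1(b)]{praeger}, $C$ must consist entirely of reflections since $\la a\ra$ is characteristic and a generating set contains a reflection, and the problem becomes: classify $4$-subsets $S\subseteq\mathbb{Z}_n$ with coprime difference set whose setwise stabiliser in $\AGL(1,n)$ is transitive --- is exactly Theorem~\ref{lem:d2n}(a) specialised to $|C|=4$. The paper then splits first on $r$, where $\Aut(G;C)\cap\la\vp\ra=\la\vp^r\ra$, getting $(|I|,r)\in\{(1,n/4),(2,n/2),(4,n)\}$; the two cases with $r<n$ are dispatched by Theorem~\ref{lem:d2n}(c),(d) and explicit $\tau_\ell$-normalisations into Example~\ref{ex:talebi}, and only when $r=n$ does it use the key observation that $A_0$ then embeds in the abelian group $T$, so transitivity plus faithfulness force $A_0$ to be regular, $\cong Z_4$ or $Z_2\times Z_2$. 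You instead split at the outset on a regular subgroup of the transitive group on $S$; this is legitimate (every transitive subgroup of $S_4$ contains a regular $Z_4$ or $Z_2\times Z_2$, and the action is faithful because $C$ generates $G$), and it absorbs the paper's cases with $r<n$ into your ``glide'' realisations, since your regular subgroups may now contain the translation $x\mapsto x+n/2$. That buys a uniform two-case scheme at the cost of a messier enumeration of affine realisations --- the very enumeration you flag as the anticipated obstacle.

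And that flagged obstacle is where your proposal stops short of a proof: the decisive computations are promised but not performed, and two of your guiding sub-descriptions would mislead a literal execution. In the cyclic case your ``order-$4$ multiplier'' is inaccurate: for Example~\ref{ex:talebi}(b) with $k=n/2-1$ and $n\equiv0\pmod 4$ the multiplier has order $2$; what works uniformly (and is what the paper does) is the orbit computation $S=\{0,\,i,\,i(k+1),\,i(k^2+k+1)\}$ with $i(1+k+k^2+k^3)\equiv0\pmod n$, followed by $\gcd(i,n)=1$ from the difference condition and scaling to $i=1$, with no hypothesis on the multiplicative order of $k$. More seriously, in the Klein case the entire separation between the two families lives in a trichotomy you compress into ``imposing the gcd condition'': the paper derives $k\equiv j\ell+i\equiv im+j$, $i(\ell+1)\equiv j(m+1)\equiv0$, $\ell^2\equiv m^2\equiv1\pmod n$, proves these conditions are \emph{symmetric} under permuting the pairs $(i,\ell),(j,m),(k,\ell m)$ (via $k(\ell m+1)\equiv0\pmod n$), and then shows that if \emph{any} of $\gcd(i,n),\gcd(j,n),\gcd(k,n)$ equals $1$, an affine normalisation (reduce to $i=1$, whence $\ell\equiv-1$ and $m\equiv1-2j\pmod n$) collapses $C$ onto Example~\ref{ex:talebi}(a), while otherwise conditions (i)--(iii) of Example~\ref{ex:new} hold verbatim. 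Without this mechanism a blind execution could silently lose the new family or fail to recognise when a Klein-type set is merely Talebi's in disguise. Finally, make faithfulness of $A_0$ on $C$ explicit: it is what upgrades ``acts on $S$ as an involution'' to the global congruences $\ell^2\equiv m^2\equiv 1\pmod n$ and $\ell,m,\ell m\not\equiv1\pmod n$ demanded by Example~\ref{ex:new}(iii). Your forward direction is fine and matches the paper's appeal to Propositions~\ref{prop:talebi} and~\ref{prop:new} together with Lemma~\ref{lem:tg}(a).
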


The graphs corresponding to subsets $C$ in Example~$\ref{ex:talebi}$ were constructed by Talebi~\cite{Talebi}, and were conjectured to be the only examples. Our construction in Example~$\ref{ex:new}$ contains infinitely many new examples, as demonstrated in Subsection~\ref{sub-bk}.

We note that  this and previous work suggests some open questions, and we mention a few in the next remark.

\begin{remark}{\rm
(a)\ We will see in Section~\ref{sec:frat} that the `relative Frattini subgroups' $\Phi(G;C)$ may be equal to, or may be much larger than the Frattini subgroup. They can be soluble or insoluble, even for transitive inverse-closed generating sets $C$. It is not clear what general properties they may have. Any additional information would help in understanding the structure of the corresponding normal edge-transitive Cayley graphs $\Cay(G;C)$.

\smallskip
(b)\ Let $C$ be a transitive inverse-closed generating set for a group $G$  in the sense of Definition~\ref{def:trans}, and suppose that the corresponding normal edge-transitive Cayley graph $\Cay(G;C)$ has valency $k$. It follows from Lemma~\ref{lem:notinphi} that the  normal edge-transitive quotient $\Cay(\ov{G};\ov{C})$, where $\ov{G}=G/\Phi(G;C)$ and $\ov{C}=C\Phi(G;C)/\Phi(G;C)$, has valency $k/\ell$, with $\ell$ as in
Lemma~\ref{lem:notinphi}(b).  It is not quite clear what values the parameter $\ell$ can take. Understanding this better would help in understanding better the structure of $\Cay(G;C)$.

\smallskip
(c)\ In Theorem~\ref{main decomposition}, we make no claims about the uniqueness of the normal quotient graphs $\{\Gamma_1,\dots,\Gamma_k\}$ in $\caycs$, or the subgroups $N_1,\dots,N_k\in\calA_{\max}(G;C)$. It would be good to understand this better. In particular, are there examples where these subgroups and graphs are not uniquely determined by $G, C$?

\smallskip
(d)\  For dihedral groups $G$, Theorem~\ref{lem:d2n} describes the family $\mathcal{T}(G)$  of transitive inverse-closed generating sets $C$, and the corresponding subgroups $\Aut(G;C)$ of $\Aut(G)$.  It would be interesting to explore the set $\mathcal{T}(G)$ for other families of groups, to shed light on the structure of normal edge-transitive Cayley graphs for these groups.
 The sets  $\mathcal{T}(G)$ for $G$ a Frobenius group  of order $qp$ (with $p$ a prime) may be extracted from the work of Darafsheh and Assari~\cite[Section 3]{Dar Ass} if $q=4$, and   work of Corr and the second author \cite{Corr Pra} if $q$ is a prime; see also \cite{ATA} if $q=3$. Those for nonabelian groups of order $4p^2$ ($p$ a prime) are determined in \cite{PI}. Similarly cyclic groups of prime power oprder are treated in \cite{SK}, abelian groups of order a power of two primes are treated in  \cite{Corr Pra}, generalised quaternion groups of order $4p$ (with $p$ a prime) in~\cite[Section 3]{Dar Ass}, and the $4$-element subsets for certain groups of order $6n$ in \cite{Dar Yag}. It would be good to see more general treatments.
}
\end{remark}



\section{The Frattini-like subgroup $\Phi(G;C)$}\label{sec:frat}

We first prove the assertion that $\Phi(G;C)$ contains the Frattini subgroup.

\begin{lemma}\label{lem:frat}
Let $G$ be a nontrivial finite group with an inverse-closed generating set $C\subseteq G\setminus\{1\}$. Let $\Phi(G;C)$ be as in $\eqref{eq:phi}$, and let $\Phi(G)$ be the Frattini subgroup. Then
\begin{enumerate}
\item[(a)] $\Phi(G)\leq \Phi(G;C)$;
\item[(b)] for $N\in\calA_{\max}(G;C)$ and $N\leq M<G$ with $M$ maximal in $G$, we have
$N=\cap_{g\in G,\sigma\in\Aut(G; C)}(M^g)^\sigma$.
\end{enumerate}
\end{lemma}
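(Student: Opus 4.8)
The plan is to establish part~(b) by a direct structural argument and then to deduce part~(a); I would also record that (a) has a short independent proof via the non-generator property of $\Phi(G)$. For~(b), I would fix $N\in\calA_{\max}(G;C)$ together with a maximal subgroup $M$ satisfying $N\leq M<G$, and set $K:=\cap_{g\in G,\,\sigma\in\Aut(G;C)}(M^g)^\sigma$. The first task is to verify $K\in\calA(G;C)$. The workhorse is the commutation identity $(H^g)^\sigma=(H^\sigma)^{g^\sigma}$, valid for any subgroup $H$, any $g\in G$, and any $\sigma\in\Aut(G)$. Taking $H=M$ and letting $g$ range over $G$, this identity shows that the family $\mathcal{F}:=\{(M^g)^\sigma\}$ coincides with the set of all $G$-conjugates of the subgroups $M^\sigma$ with $\sigma\in\Aut(G;C)$; consequently $\mathcal{F}$ is closed under conjugation by $G$, so $K$ is normal in $G$. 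Applying the identity once more, and using that $\Aut(G;C)$ is closed under composition, shows $\mathcal{F}$ is permuted by each $\tau\in\Aut(G;C)$, so $K^\tau=K$. Since $K\leq M<G$, this gives $K\in\calA(G;C)$.

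Next I would sandwich $N$ between itself and $K$. As $N\unlhd G$ with $N\leq M$, we have $N=N^g\leq M^g$ for every $g\in G$; applying any $\sigma\in\Aut(G;C)$ and invoking $N^\sigma=N$ yields $N\leq(M^g)^\sigma$ for all $g,\sigma$, so $N\leq K$. Since $N$ is maximal in $\calA(G;C)$ and $K\in\calA(G;C)$ with $N\leq K$, maximality forces $N=K$, which is exactly~(b).

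For~(a), the quickest route is to observe that each factor $(M^g)^\sigma$ in the intersection from~(b) is itself a maximal subgroup of $G$ and hence contains $\Phi(G)$; thus $\Phi(G)\leq N$ for every $N\in\calA_{\max}(G;C)$, and intersecting over all such $N$ gives $\Phi(G)\leq\Phi(G;C)$. Independently, one could fix $N\in\calA_{\max}(G;C)$ and consider $N\Phi(G)$, which is normal and $\Aut(G;C)$-invariant because $\Phi(G)$ is characteristic; were it equal to $G$, the non-generator property of $\Phi(G)$ would force $N=G$, contradicting $N\neq G$, so $N\Phi(G)\in\calA(G;C)$ and maximality of $N$ again gives $\Phi(G)\leq N$.

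The step I expect to be the main obstacle is confirming in~(b) that $K$ is simultaneously normal in $G$ and $\Aut(G;C)$-invariant. All of this rests on the identity $(H^g)^\sigma=(H^\sigma)^{g^\sigma}$ and on $\Aut(G;C)$ being a subgroup (so that $\sigma\mapsto\sigma\tau$ preserves membership); one must keep in mind that $M$ need not itself be $\Aut(G;C)$-invariant, which is exactly why it is necessary to pass to the full family of conjugate-images rather than to $M$ alone.
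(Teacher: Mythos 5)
Your proof is correct and follows essentially the same route as the paper: both define $K=\cap_{g,\sigma}(M^g)^\sigma$, verify $K\in\calA(G;C)$, observe $N\leq K$ by normality and $\Aut(G;C)$-invariance of $N$, and invoke maximality to get $N=K$, with part~(a) following because each $(M^g)^\sigma$ is maximal and so contains $\Phi(G)$. Your extra touches --- spelling out via the identity $(H^g)^\sigma=(H^\sigma)^{g^\sigma}$ why $K$ is normal and invariant (which the paper asserts ``by definition''), and the alternative argument for~(a) via the non-generator property of $\Phi(G)$ applied to $N\Phi(G)$ --- are sound refinements rather than a different method.
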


\begin{proof}
Let $\mathcal{M}$ be the set of maximal proper subgroups of $G$, so by definition, $\Phi(G)=\cap_{M\in\mathcal{M}}M$. For each $M\in \mathcal{M}$, and each $g\in G, \sigma\in\Aut(G;C)$, the image $(M^g)^\sigma$ is also a subgroup in $\mathcal{M}$, and so $\Phi(G)\leq N_M$,
where $N_M:=\cap_{g\in G,\sigma\in\Aut(G; C)}(M^g)^\sigma$. Note that, by definition, $N_M\in
\calA(G; C)$.

Now let $N\in \calA_{\max}(G;C)$, and let $M\in \mathcal{M}$ such that $N\leq M$ (note that such an $M$ exists since $G$ is finite).  Then $N\leq N_M$ since $N$ is normal in $G$ and $\Aut(G; C)$-invariant. By the maximality of $N$, it now follows that $N=N_M$, proving part (b).
 Since this holds for all $N\in \calA_{\max}(G;C)$, it follows that  $\Phi(G)\leq \cap_{N\in \calA_{\max}(G;C)} N = \Phi(G;C)$, proving part (a).
\end{proof}

Sometimes equality holds in Lemma~\ref{lem:frat}(a), for example when $G$ is a cyclic group (Example~\ref{ex:frat0}).  It is also easy to construct examples for which equality does not hold, for example, when $G$ is a dihedral group (Example~\ref{ex:frat}), or is almost simple but not simple (Example~\ref{ex:frat2}). Example~\ref{ex:frat} shows how $\Phi(G;C)$ can vary for different inverse-closed generating sets $C$.

\begin{example}\label{ex:frat0}
{\rm
Let $G=\la a\mid a^n=1\ra\cong Z_{n}$ with $n\geq2$. Each subgroup of $G$ is normal and $\Aut(G)$-invariant. In particular the maximal subgroups of $G$ are the cyclic subgroups $M_p=\la a^p\ra\cong Z_{n/p}$, for  primes $p$ dividing $n$. These are also the maximal normal $\Aut(G;C)$-invariant subgroups for each inverse-closed generating set $C$ of $G$. Thus
 the Frattini subgroup (which is the intersection of all the maximal subgroups of $G$), and also  $\Phi(G;C)$, are equal to
\[
\Phi(G)=\Phi(G;C)=\la a^m\ra,\ \mbox{where $m$ is the product of the distinct prime divisors of $n$.}
\]
In particular, $\Phi(G)=\Phi(G;C)=1$ if and only if $n$ is square-free.
}
\end{example}

The next example is for the dihedral group
\begin{equation}\label{eqd}
G=\la a, b\mid a^n=b^2=1, bab=a^{-1}\ra \cong D_{2n}
\end{equation}
of order $2n$. This group $G$ admits automorphisms $\vp, \tau_k$, for positive integers $k<n$ such that $\gcd(k,n)=1$, defined on the generators $a, b$ as follows:
\begin{equation}\label{eqvpt}
\vp: a\to a,\ b\to ba; \quad \tau_k: a\to a^k,\ b\to b,
\end{equation}
and we have
\begin{equation}\label{eqautd}
\Aut(G) = A\rtimes T,\ \mbox{where}\ A=\la\vp\ra\cong Z_n,\ \mbox{and}\ T=\{ \tau_k \mid 1\leq k\leq n-1,\ \gcd(k,n)=1\} \cong \Aut(Z_n).
\end{equation}

\begin{example}\label{ex:frat}
{\rm
Let $G\cong D_{2n}$, as in \eqref{eqd}, with $n\geq3$. If $n$ is odd then the $n$ involutions in $G$ form a single $G$-conjugacy class. On the other hand, if $n$ is even then the $n$ non-central involutions form two $G$-conjugacy classes of size $n/2$, with representatives $b, ba$, and $\vp\in\Aut(G)$ interchanges these two classes.

The maximal subgroups of $G$ are of two types. Firstly there is the cyclic subgroup $M=\la a\ra\cong Z_n$. Secondly, for each prime $p$ dividing $n$, we have a dihedral subgroup
\[
M_p:= \la a^p,b\ra\cong D_{2n/p} \ \mbox{and also, if $n$ is even and $p=2$,} \ M_2':=\la a^2, ba\ra\cong D_n.
\]
If $n$ is even then the two subgroups $M_2, M_2'$ are distinct index $2$ (hence normal) subgroups of $G$, and $M_2\cap M_2'=\la a^2\ra$. Thus, for arbitrary $n$,
 the Frattini subgroup (which is the intersection of all the maximal subgroups of $G$) is equal to
\[
\Phi(G)=\la a^m\ra,\ \mbox{where $m$ is the product of the distinct prime divisors of $n$.}
\]
For example, $\Phi(G)=1$ if and only if $n$ is square-free.
Now we consider $\Phi(G;C)$ for two particular inverse-closed generating sets $C$ of $G$, namely
\[
C_1=\{a, a^{-1},b\},\quad \mbox{and}\quad C_2=\{b, ba\}.
\]
In each case, to determine $\Phi(G;C)$ we first identify $\Aut(G;C)$, and then we find all subgroups $N\in\calA_{\max}(G;C)$, that is to say, all proper normal subgroups $N$ of $G$ which are maximal subject to being  $\Aut(G;C)$-invariant. It follows from \eqref{eqautd} and the definitions of the $C_i$ that $\Aut(G;C_1)=\la \tau_{n-1}\ra\cong Z_2$,
and $\Aut(G;C_2)=\la \tau_{n-1}\vp\ra\cong Z_2$. In both cases the subgroup  $M=\la a\ra$ is invariant under $\Aut(G)$, and hence $M\in \calA_{\max}(G;C_i)$ so $\Phi(G;C_i)\leq M$ for each $i$.
Let $N$ be an arbitrary subgroup in $\calA_{\max}(G;C_i)$. Since $N$ is a proper subgroup of $G$ it is contained in some maximal subgroup, namely either $M$, or one if the $M_p$ or $M_2'$ (with $n$ even in the last case).  If $N\leq M$, then by maximality $N=M$. If $N\leq M_p$ for some odd prime $p$, then, as $N$ is normal in $G$, it follows that $N$ is contained in $\cap_{g\in G}M_p^g=\la a^p\ra$, and hence $N$ is a proper subgroup of $M$, contradicting the maximality of $N$. Thus
\[
\mbox{if $n$ is odd then, for each $i$, $\calA_{\max}(G;C_i)=\{\la a\rangle\}$ and $\Phi(G;C_i)=\la a\ra > \Phi(G)$}.
\]
Suppose now that $n$ is even. We showed above that either $N=M$ or $N\leq M_2$ or $N\leq M_2'$. If $i=1$ then each of $M_2, M_2'$ is left invariant by $\Aut(G;C_1)=\la \tau_{n-1}\rangle$ and so
\[
\mbox{if $n$ is even then $\calA_{\max}(G;C_1)=\{\la a\rangle, M_2, M_2'\}$ and hence $\Phi(G;C_1)
=\la a^2\ra \geq  \Phi(G)$},
\]
and in fact $\Phi(G) < \Phi(G;C_1)$ unless $n=2^e\geq4$,  in which case $\Phi(G;C_1)=\Phi(G)=\la a^2\ra>1$. If $i=2$ and $N$ is contained in one of $M_2, M_2'$, then since $\Aut(G;C_2)=\la \tau_{n-1}\vp\rangle$ interchanges $M_2$ and $M_2'$, we have $N\leq M_2\cap M_2'=\la a^2\ra < M$, contradicting the maximality of $N$. Thus
\[
\mbox{if $n$ is even then $\calA_{\max}(G;C_2)=\{\la a\rangle\}$ and $\Phi(G;C_2)
=\la a\ra >  \Phi(G)$}.
\]
}
\end{example}

\begin{example}\label{ex:frat2}
{\rm
Let $G$ be an almost simple group with unique minimal normal subgroup $T$, a nonabelian simple group. If $G=T$, then the only proper normal subgroup of $G$ is the trivial subgroup and hence $\Phi(G)=\Phi(G;C)=1$ for all inverse-closed generating sets $C$.

Suppose now that $G\ne T$. For example $(G, T)=(S_5, A_5)$. Then $G$ has a maximal proper subgroup $M$ which does not contain $T$. (This assertion follows from the finite simple group classification.)
Thus $\Phi(G)\leq \cap_{g\in G}M^g=1$, that is, $\Phi(G)=1$. On the other hand, each maximal proper normal subgroup $N$ contains $T$, and for any generating set $C$ for $G$, the subgroup $T$ is $\Aut(G; C)$-invariant. Thus  $\Phi(G;C)\geq T$ and so  $\Phi(G;C)>\Phi(G)$.
Often we will have
$\calA_{\max}(G;C)=\{T\}$ so that $\Phi(G;C)= T$, but this is not necessarily the case. For example if $G= {\rm P}\Gamma{\rm L}(2,2^{p^2})$ with $T={\rm PSL}(2,2^{p^2})$, for some odd prime $p$, then  $\calA_{\max}(G;C)=\{T.p\}$ and $\Phi(G;C)= T.p$.
}
\end{example}

We now prove Theorem~\ref{thm:frat}.

\begin{proof} \emph{of Theorem}~\ref{thm:frat}.  Let $G=\la C\ra$. \\
(a)  Let $X$ be a normal $C$-closed generating set of $G$, and let $Y=X\setminus(X \cap \Phi(G;C))$. By \eqref{eq:phi}, $\Phi(G;C)$ is normal $C$-closed, and hence also $Y$ is normal $C$-closed. This implies that $K:=\la Y\ra$ is a normal $\Aut(G;C)$-invariant subgroup of $G$. Suppose that $K\ne G$. Then $K\in\calA(G;C)$, and there exists $N\in\calA_{\max}(G;C)$ such that $K\leq N$. By definition $Y\subseteq K\subseteq N$, and also $X\cap \Phi(G;C)\subseteq \Phi(G;C) \subseteq N$ by \eqref{eq:phi}. Thus $X\subseteq N$, which is a contradiction since $X$ generates $G$. Thus $K=G$, and hence $Y$ is a   normal $C$-closed generating set.

(b) Let $S$ be the subset consisting of all elements $y$ of $G$ which satisfy the condition in Theorem~\ref{thm:frat}(b). Then by part (a), $\Phi(G;C)\subseteq S$. Now let $y\in S$ and let
$N\in \calA_{\max}(G;C)$. We claim that $y\in N$. Suppose to the contrary that $y\not\in N$.
Let $\widehat{N} = N\cup \{(y^g)^\sigma\mid \sigma\in\Aut(G;C), g\in G\}$ (the normal $C$-closure of $N\cup\{y\}$), and let $M=\la \widehat{N}\ra$. Since $N$ is normal in $G$ and
$\Aut(G;C)$-invariant, the subgroup $M$  is also  normal in $G$ and $\Aut(G;C)$-invariant. Then since $N$ is properly contained in $M$ and $N\in \calA_{\max}(G;C)$, it follows that
$G=M$. Then, since $y$ satisfies  the condition in Theorem~\ref{thm:frat}(b), the normal $C$-closure of $N$ (which is just $N$ itself) must generate $G$. This is a contradiction. Hence $y\in N$. Since this holds for all $N\in \calA_{\max}(G;C)$ it follows from \eqref{eq:phi} that $y\in\Phi(G;C)$.
 \end{proof}

\subsection{Transitive inverse-closed generating sets}

For our application to normal edge-transitive Cayley graphs, we are interested in generating sets $C$ for $G$ which have the following property (see \cite[Lemma 1]{praeger}).

\begin{definition}\label{def:trans}
{\rm
An inverse-closed generating set $C$ for a nontrivial group $G$ is said to be \emph{transitive} if either
the $\Aut(G;C)$-action on $C$ is transitive, or there are two orbits $C_+$ and $C_-$ in this action such that $C=C_+\cup C_-$ and $C_-=(C_+)^{-1}$, where $(C_+)^{-1}  = \{c^{-1}\mid c\in C_+\}$.
}
\end{definition}

Transitive generating sets have some useful properties.

\begin{lemma}\label{lem:notinphi}
	Let $G$ be a nontrivial  finite group with transitive inverse-closed generating set  $C$. Then
	\begin{enumerate}
	\item[(a)]  $C\cap\Phi(G;C)=\emptyset$;
	\item[(b)]  there is a positive integer $\ell$ such that $|C\cap y\Phi(G;C)|\in\{0,\ell\}$ for each $y\in G$. Moreover, if both $C\cap y\Phi(G;C)\ne\emptyset$ and $C\cap y'\Phi(G;C)\ne\emptyset$, then there exists $\sigma\in\Aut(G;C)$ such that $(y'\Phi(G;C))^\sigma$ is either $y\Phi(G;C)$ or
	$y^{-1}\Phi(G;C)$.
\end{enumerate}
\end{lemma}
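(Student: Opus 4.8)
```latex
The plan is to prove both parts by exploiting the interplay between the transitivity of the $\Aut(G;C)$-action on $C$ (Definition~\ref{def:trans}) and the fact, established in Theorem~\ref{thm:frat}, that $\Phi(G;C)$ behaves like a Frattini subgroup with respect to normal $C$-closed generating sets. For part (a), I would argue by contradiction. Suppose some $c\in C$ lies in $\Phi(G;C)$. Since $C$ is transitive, every element of $C$ is obtained from $c$ (or from $c$ and $c^{-1}$) by applying automorphisms in $\Aut(G;C)$; because $\Phi(G;C)$ is $\Aut(G;C)$-invariant and normal in $G$ (so closed under inversion as well), this would force the entire orbit of $c$, and hence all of $C$, to lie in $\Phi(G;C)$. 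But then $G=\la C\ra\leq\Phi(G;C)$, contradicting $\Phi(G;C)\in\calA(G;C)$, i.e. $\Phi(G;C)\neq G$. This gives $C\cap\Phi(G;C)=\emptyset$.

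For part (b), set $\Phi:=\Phi(G;C)$ and consider the natural action of $\Aut(G;C)$ on the set of cosets $\{y\Phi\mid y\in G\}$: since $\Phi$ is $\Aut(G;C)$-invariant, each $\sigma\in\Aut(G;C)$ permutes these cosets via $(y\Phi)^\sigma=y^\sigma\Phi$, and this action is compatible with the $\Aut(G;C)$-action on $C$ in the sense that $(C\cap y\Phi)^\sigma = C\cap y^\sigma\Phi$. The key observation is that the transitivity hypothesis makes the $\Aut(G;C)$-action on the nonempty sets $\{C\cap y\Phi\}$ transitive up to inversion: any two elements of $C$ are related by some $\sigma\in\Aut(G;C)$ possibly composed with the inverse map (which sends $C_+$ to $C_-=(C_+)^{-1}$). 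I would first establish the ``moreover'' clause: if $C\cap y\Phi$ and $C\cap y'\Phi$ are both nonempty, pick $c\in C\cap y\Phi$ and $c'\in C\cap y'\Phi$; by transitivity there is $\sigma\in\Aut(G;C)$ with $(c')^\sigma\in\{c,c^{-1}\}$, whence $(y'\Phi)^\sigma$ equals $y\Phi$ or $y^{-1}\Phi$ respectively.

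From this it follows that all the nonempty sets $C\cap y\Phi$ are related by automorphisms in $\Aut(G;C)$ together with inversion. Since inversion is a bijection from $C\cap y\Phi$ to $C\cap y^{-1}\Phi$ (as $C$ and $\Phi$ are both inverse-closed, the latter because $\Phi\unlhd G$), and each $\sigma\in\Aut(G;C)$ restricts to a bijection $C\cap y'\Phi\to C\cap y^\sigma\Phi$, every nonempty intersection $C\cap y\Phi$ has the same cardinality; call it $\ell$. Thus $|C\cap y\Phi|\in\{0,\ell\}$ for all $y$, completing part (b). I expect the main obstacle to be bookkeeping the two-orbit case of the transitivity definition cleanly: I must carefully track how the inverse map interacts with the automorphism action so that the conclusion is stated as ``$y\Phi$ or $y^{-1}\Phi$'' rather than just ``$y\Phi$'', and ensure the cardinality argument genuinely uses only bijections (compositions of $\sigma$-restrictions and inversion) rather than accidentally conflating distinct cosets.
```
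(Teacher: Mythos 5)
Your proposal is correct and follows essentially the same route as the paper's proof: for (a), the orbit of a hypothetical $c\in C\cap\Phi(G;C)$ together with inverse-closedness of the subgroup forces $C\subseteq\Phi(G;C)$ and hence the contradiction $G=\la C\ra\leq\Phi(G;C)$; for (b), choosing representatives $c,c'\in C$ of the cosets, applying transitivity up to inversion, and noting that both $\sigma$ and the inversion map $(C\cap c\Phi)^{-1}=C\cap c^{-1}\Phi$ are bijections between the intersections. Your bookkeeping of the two-orbit case and of the identity $(C\cap y'\Phi)^\sigma=C\cap (y')^\sigma\Phi$ matches the paper's argument exactly.
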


\begin{proof}
(a)  Suppose that $c\in C\cap\Phi(G;C)$. Then, since $\Phi(G;C)$ is $\Aut(G;C)$-invariant, each element of the orbit containing $c$, under the natural action of $\Aut(G;C)$, lies in $\Phi(G;C)$. By
Definition~\ref{def:trans}, this means that for each $c'\in C$, either $c'$ or its inverse lies in $\Phi(G;C)$. Then since $\Phi(G;C)$ is closed under forming inverses (as it is a subgroup), we have  $C\subseteq \Phi(G;C)$. Hence $G=\la C\ra\leq \Phi(G;C)$, which is a contradiction, since $\Phi(G;C)$ is a proper subgroup.

(b)  Suppose that $C\cap y\Phi(G;C)\ne\emptyset$ and $C\cap y'\Phi(G;C)\ne\emptyset$. Then there exist $c, c'\in C$ such that $y\Phi(G;C)=c\Phi(G;C)$ and $y'\Phi(G;C)=c'\Phi(G;C)$. By
Definition~\ref{def:trans},  there exists $\sigma\in\Aut(G;C)$ such that $(c')^\sigma$ is equal to either $c$ or
$c^{-1}$, and hence, since $\Phi(G;C)$ is $\Aut(G;C)$-invariant,  $(y'\Phi(G;C))^\sigma =(c'\Phi(G;C))^\sigma$ is either $c\Phi(G;C)=y\Phi(G;C)$ or
$c^{-1}\Phi(G;C)=y^{-1}\Phi(G;C)$. Note that, since $\Phi(G;C)$ is a normal subgroup of $G$, we have $(c\Phi(G;C))^{-1} = c^{-1}\Phi(G;C)$. Then, since $C$ is inverse-closed, $(C\cap c\Phi(G;C))^{-1} = C^{-1}\cap (c\Phi(G;C))^{-1} = C \cap  c^{-1}\Phi(G;C)$.
Thus, the cardinality $|C\cap y'\Phi(G;C)|=|C\cap y\Phi(G;C)|$ is constant for all non-empty intersections.
\end{proof}


 \section{Quotients of normal edge-transitive graphs}\label{sec:netg}

In this section we prove Theorem~\ref{main decomposition}, as a consequence of a  more general result, Theorem~\ref{main}. First we summarise our notation for graphs and their homomorphisms.

\subsection{Graph concepts}\label{sub:graphnot}
We will only consider finite simple undirected  graphs. That is, a \emph{graph} $\Gamma=(V, E)$ will consist of a finite set $V$ of vertices, and a set $E$ of edges, where each edge is an unordered pair of distinct vertices. We often denote the vertex set and the edge set of $\Gamma$ by $V(\Gamma)$ and $E(\Gamma)$, respectively.

A \emph{subgraph} $\Gamma'=(V', E')$ of  $\Gamma=(V, E)$, is a graph with $V'\subseteq V$, $E'\subseteq E$, and each $\{u,v\}\in E'$ is a subset of $V'$.  A \emph{homomorphism} from $\Gamma_1=(V_1,E_1)$ to $\Gamma_2=(V_2,E_2)$ is a map $f:V_1\to V_2$  such that, for each $\{u,v\}\in E_1$, the image $\{u^f, v^f\}\in E_2$. A graph homomorphism $f$ is called
\begin{itemize}
	\item  an {\it epimorphism}, if $f$ is onto (on vertices);
	\item  a {\it full homomorphism}, if the induced map $E_1\to E_2$ is onto;
	\item  a {\it full epimorphism}, if $f$ is both an epimorphism and a full homomorphism;
	\item an {\it isomorphism} if $f$ is a full homomorphism and is a bijection on vertices; in this case  $\Gamma_1$ and $\Gamma_2$ are said to be \emph{isomorphic}, written $\Gamma_1\cong \Gamma_2$.
\end{itemize}
For graphs $\Ga_i=(V_i,E_i)$, where $1\leq i\leq n$, their \emph{direct product}
is the graph $(V,E)$ with $V=V_1\times\dots\times V_n$ and $E$ consisting of all the pairs $\{u,v\}\subset V$ such that $\{u_i,v_i\}\in E_i$ for each $i\leq n$. We denote this graph by $\prod_{i=1}^n\Ga_i$. A \emph{subdirect product} of a family of graphs $\{\Gamma_1,\dots,\Ga_n\}$ is a subgraph $\Gamma$ of the direct product $\prod_{i=1}^n \Gamma_i$ such that, for each $i\leq n$, the natural map $\pi_i:\Gamma\to \Gamma_i$ (where $\pi_i:u\to u_i$) is a graph epimorphism. It is called a \textit{full subdirect product} if each $\pi_i$ is a full epimorphism (see \cite{Caicedo}).

Our main graph theoretical result Theorem~\ref{main decomposition} relies in an essential way on the following fact about products of Cayley graphs.

\begin{lemma}\label{lem:cayprod}
Let $n$ be a positive integer $n\geq2$, and let $G_1,\dots,G_n$ be nontrivial finite groups with inverse-closed generating sets $C_1,\dots,C_n$, respectively.
Consider the Cayley graphs  
\[
\Gamma:=\Cay(G_1\times\dots\times G_n, C_1\times\dots\times C_n),\ \text{and}\ \Gamma_i:=\Cay(G_i, C_i),
\ \text{for}\ i=1,\dots,n.
\] 
Then $\Gamma$ is the direct product graph $\prod_{i=1}^n\Gamma_i$.
\end{lemma}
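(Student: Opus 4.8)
The plan is to exhibit the identity map on the common vertex set as a graph isomorphism, by directly comparing the two edge sets. First I would record that $C:=C_1\times\dots\times C_n$ is a legitimate connection set for $G:=G_1\times\dots\times G_n$: it is non-empty (each $C_i$ is, since each $G_i$ is nontrivial), it avoids the identity $(1,\dots,1)$ of $G$ (since each $c_i\neq 1$), and it is inverse-closed, because $(c_1,\dots,c_n)^{-1}=(c_1^{-1},\dots,c_n^{-1})$ with each $c_i^{-1}\in C_i$. Hence $\Gamma=\Cay(G,C)$ is well-defined. Note that I do \emph{not} need $C$ to generate $G$: the definition of a Cayley graph requires only that the connection set be non-empty, inverse-closed, and avoid the identity, while generation is equivalent to connectedness, which is not asserted here. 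The vertex set of $\Gamma$ is $G=G_1\times\dots\times G_n$, which is literally the vertex set $V(\Gamma_1)\times\dots\times V(\Gamma_n)$ of $\prod_{i=1}^n\Gamma_i$ since $V(\Gamma_i)=G_i$, and I take $f=\id$ as the candidate isomorphism.

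The core step is the edge comparison. For vertices $g=(g_1,\dots,g_n)$ and $h=(h_1,\dots,h_n)$, I would unwind both definitions. On one side, $\{g,h\}\in E(\Gamma)$ exactly when $gh^{-1}\in C$, and since $gh^{-1}=(g_1h_1^{-1},\dots,g_nh_n^{-1})$ this holds precisely when $g_ih_i^{-1}\in C_i$ for every $i\leq n$. On the other side, $\{g,h\}$ is an edge of the direct product $\prod_{i=1}^n\Gamma_i$ exactly when $\{g_i,h_i\}\in E(\Gamma_i)$ for every $i$, and $\{g_i,h_i\}\in E(\Cay(G_i,C_i))$ means $g_ih_i^{-1}\in C_i$. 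The two conditions coincide, so $\{g,h\}\in E(\Gamma)$ if and only if $\{g,h\}\in E(\prod_{i=1}^n\Gamma_i)$. As $f$ is the identity bijection on vertices and the two edge sets are thereby equal, $f$ is a full homomorphism and a bijection on vertices, hence an isomorphism in the sense defined above.

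I expect no serious obstacle, since this is a definition-chasing argument; the only points needing care are bookkeeping ones. The distinctness of the two endpoints of each edge is automatic: membership $g_ih_i^{-1}\in C_i$ forces $g_i\neq h_i$ because $C_i$ avoids the identity, so an edge of the direct product has $g_i\neq h_i$ for all $i$, consistent with the fact that $gh^{-1}\in C$ forces $g\neq h$. One should also confirm that the symmetry of the unordered-pair condition agrees on both sides, which follows from the inverse-closedness of each $C_i$ (and hence of $C$), as the condition $gh^{-1}\in C$ is equivalent to $hg^{-1}\in C$. With these routine verifications, the identity map is the required graph isomorphism $\Gamma\cong\prod_{i=1}^n\Gamma_i$.
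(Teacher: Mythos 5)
Your proof is correct and takes essentially the same route as the paper: both arguments are a direct definition-chase showing that the edge set of $\Cay(G_1\times\dots\times G_n, C_1\times\dots\times C_n)$ coincides componentwise with that of the direct product graph, the only cosmetic difference being that you phrase edges via the condition $gh^{-1}\in C$ while the paper writes them as pairs $\{g,cg\}$. Your additional bookkeeping (inverse-closedness of the product set, avoidance of the identity, and the observation that generation is not needed) is sound and consistent with the paper.
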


\begin{proof}
By the definition of a Cayley graph, for each $i$, the edge set of $\Gamma_i$ is the set of all pairs of the form $\{g_i,c_ig_i\}$ with $g_i\in G_i$ and $c_i\in C_i$. Similarly, the edge-set of $\Gamma$ is the set of all pairs of the form $\{g, cg\}$ where $g=(g_1,\dots,g_n)\in\prod_{i=1}^nG_i$ and $c=(c_1,\dots,c_n)\in\prod_{i=1}^nC_i$. In such a pair the element $cg=(c_1g_1,\dots,c_ng_n)$, and hence these pairs $\{g,cg\}$ are precisely the pairs for which $\{g_i,c_ig_i\}$ is an edge of $\Gamma_i$, for each $i=1,\dots,n$. By the definition of a direct product graph, $\Gamma$ is the direct product $\prod_{i=1}^n\Gamma_i$.  
\end{proof}

\subsection{Proof of Theorem~\ref{main decomposition}}
Now we state and prove our main decomposition result for normal edge-transitive Cayley graphs. Recall that, for an inverse-closed generating set $C$ for a group $G$,  $\Cay(G;C)$ is normal edge-transitive if and only if $G_R\rtimes\Aut(G;C)$ is transitive on edges and, by \cite[Proposition 1(b)]{praeger}, this is equivalent to the set  $C$ being transitive in the sense of Definition~\ref{def:trans}.

\begin{theorem}\label{main}
	Let $G$ be a nontrivial  finite group with inverse-closed generating set  $C$ such that
	$\Ga=\Cay(G;C)$ is normal edge-transitive. Then
	there exist $k\geq 1$ and $N_1,\dots,N_k\in\calA_{\max}(G;C)$ for which the following conditions hold for the subgroup $X:=\cap_{i=1}^k N_i$
	and quotient graphs $\Ga_i:= \Cay(G/N_i;CN_i/N_i)$.
	\begin{enumerate}[{\rm (i)}]
		\item $X\in\calA(G;C)$ and the map $\zeta: gX\rightarrow (gN_1,\cdots,gN_k)$ is a group isomorphism $G/X \rightarrow \prod_{i=1}^k G/N_i$;
		\item  for each $i\leq k$, $\Ga_i\in \caycs$, and $\zeta$ induces a graph isomorphism from $\Cay(G/X; CX/X)$ onto a full subdirect product of $\prod_{i=1}^k\Ga_i$.
\item[{\rm (iii)}] Moreover the unique smallest $X$ for which both (i) and (ii) hold is $X=\Phi(G;C)$.
	\end{enumerate}
\end{theorem}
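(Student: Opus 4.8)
The plan is to establish the existence of the $N_i$ together with statement (i) by a greedy selection argument, to deduce the graph-theoretic statement (ii) from Lemma~\ref{lem:cayprod}, and then to read off the minimality statement (iii) directly.

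First I would build the family $N_1,\dots,N_k$ inductively so as to force the natural projection to be onto. Choose any $N_1\in\calA_{\max}(G;C)$ (this set is nonempty). Suppose $N_1,\dots,N_j$ have been chosen so that $Y_j:=\cap_{i=1}^j N_i$ satisfies $G/Y_j\cong\prod_{i=1}^j G/N_i$ under the natural map; the base case $j=1$ is trivial. If $Y_j=\Phi(G;C)$, stop and put $k=j$. Otherwise $Y_j$ properly contains $\Phi(G;C)=\cap\{N:N\in\calA_{\max}(G;C)\}$, so there is some $N_{j+1}\in\calA_{\max}(G;C)$ with $Y_j\not\subseteq N_{j+1}$. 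The product $Y_jN_{j+1}$ is a normal, $\Aut(G;C)$-invariant subgroup of $G$ that properly contains $N_{j+1}$, so the maximality of $N_{j+1}$ in $\calA(G;C)$ forces $Y_jN_{j+1}=G$. By the standard fact that $M,N\unlhd G$ with $MN=G$ give $G/(M\cap N)\cong G/M\times G/N$, we obtain $G/Y_{j+1}\cong (G/Y_j)\times(G/N_{j+1})\cong\prod_{i=1}^{j+1}G/N_i$. Since $Y_{j+1}\subsetneq Y_j$ and $G$ is finite, the process terminates with $Y_k=\Phi(G;C)$. This yields exactly the group isomorphism $\zeta$ of (i), with $X=\Phi(G;C)$, which lies in $\calA(G;C)$ as a finite intersection of proper, normal, $\Aut(G;C)$-invariant subgroups.

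For (ii), each $G/N_i$ is characteristically simple because $N_i\in\calA_{\max}(G;C)$, and $\Ga_i=\Cay(G/N_i;CN_i/N_i)$ is a connected normal edge-transitive Cayley graph by \cite[Theorem 3]{praeger}, so $\Ga_i\in\caycs$. By Lemma~\ref{lem:cayprod}, $\prod_{i=1}^k\Ga_i=\Cay(\prod_i G/N_i,\ \prod_i CN_i/N_i)$. Since $\zeta$ is a group isomorphism it carries $\Cay(G/X;CX/X)$ isomorphically onto $\Cay(\prod_i G/N_i,\ \zeta(CX/X))$, and $\zeta(CX/X)=\{(cN_1,\dots,cN_k):c\in C\}$ is contained in $\prod_i CN_i/N_i$; hence the image is a subgraph of $\prod_{i=1}^k\Ga_i$ on the full vertex set, so each projection $\pi_i$ is onto on vertices. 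To see that each $\pi_i$ is a full epimorphism I would lift an arbitrary edge $\{gN_i,cgN_i\}$ of $\Ga_i$ (with $c\in C$, $g\in G$): choose a vertex $v$ whose $i$-th coordinate is $gN_i$ and form the edge $\{v,\,(cN_1,\dots,cN_k)\,v\}$ of the image graph, which $\pi_i$ sends onto $\{gN_i,cgN_i\}$. Thus the image is a full subdirect product of $\prod_{i=1}^k\Ga_i$.

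Finally, for (iii), any subgroup $X$ arising as in the theorem is an intersection $\cap_{i=1}^k N_i$ of members of $\calA_{\max}(G;C)$ and therefore contains $\Phi(G;C)=\cap\{N:N\in\calA_{\max}(G;C)\}$; since the construction above realises the value $X=\Phi(G;C)$, this is the unique smallest choice. The step I expect to be the main obstacle is the inductive verification that $Y_jN_{j+1}=G$: this is where the maximality of $N_{j+1}$ in $\calA(G;C)$ is essential, since without it one would only obtain an embedding of $G/Y_{j+1}$ as a proper subdirect product rather than the full direct product required for $\zeta$ to be an isomorphism. The graph-theoretic translation in (ii) is then routine bookkeeping via Lemma~\ref{lem:cayprod}, and (iii) is immediate once existence with $X=\Phi(G;C)$ is in hand.
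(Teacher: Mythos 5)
Your proposal is correct, but it takes a genuinely different route from the paper. The paper proves parts (i) and (ii) by induction on $|G|$: it picks $N_1\in\calA_{\max}(G;C)$, finds $H\in\calA(G;C)$ with $N_1H=G$, applies the inductive hypothesis to $G/H$, and then spends considerable effort transporting the resulting subgroups, graphs and isomorphisms back up to $G$ (via the maps $\nu_i$, $\mu$, $\ov{\zeta}$); crucially, that induction only produces \emph{some} admissible family, not necessarily one with $X=\Phi(G;C)$, so the paper needs a separate argument for (iii), choosing a family with $|X|$ minimal and showing that any $N\in\calA_{\max}(G;C)$ with $X\not\leq N$ would let one append $N_{k+1}=N$ and strictly shrink $X$. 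Your greedy descending chain $Y_1\supsetneq Y_2\supsetneq\cdots$ inside $G$ itself, driven by the observation that $Y_j\not\leq N_{j+1}$ forces $Y_jN_{j+1}=G$ by maximality of $N_{j+1}$ in $\calA(G;C)$, is in effect the paper's part-(iii) extension step promoted to the main construction: it terminates exactly at $Y_k=\Phi(G;C)$, so (iii) reduces to the trivial remark that any intersection of members of $\calA_{\max}(G;C)$ contains $\Phi(G;C)$, and it avoids the induction on $|G|$ and its quotient bookkeeping entirely. Your treatment of (ii) — transporting $\Cay(G/X;CX/X)$ through the group isomorphism $\zeta$, observing $\zeta(CX/X)\subseteq\prod_i CN_i/N_i$ so the image sits inside $\prod_i\Ga_i$ by Lemma~\ref{lem:cayprod}, and lifting each edge $\{gN_i,cgN_i\}$ of $\Ga_i$ through the edge $\{gX,cgX\}$ to get fullness — matches the paper's verification essentially verbatim. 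One point you leave implicit (as does the paper, via the citation of \cite[Theorem 3]{praeger}): the quotient connection sets $CN_i/N_i$ avoid the identity, i.e.\ $C\cap N_i=\emptyset$, which is what guarantees the projections carry edges to genuine edges rather than loops; this follows from transitivity of $C$ (Definition~\ref{def:trans}) exactly as in the proof of Lemma~\ref{lem:notinphi}(a), since $c\in C\cap N_i$ would force $C\subseteq N_i$ and hence $N_i=G$. With that noted, your argument is complete and, if anything, cleaner than the published one.
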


\begin{proof}
We prove parts (i) and (ii) by induction on $|G|$. These assertions hold if  $\calA_{\max}(G;C)=\{ \{1_G\}\}$, since in this case $G$ is characteristically simple and so $\Cay(G;C)\in\caycs$, and we take $k=1$ and $X=N_1=\Phi(G;C)=1$. In particular this is the case if $G$ is simple. Suppose inductively that $\calA_{\max}(G;C)$ consists of nontrivial subgroups, and that parts (i) and (ii) hold for groups of order less than $|G|$.

Choose $N_1\in\calA_{\max}(G;C)$. By definition $G/N_1$ is characteristically simple and since the graph $\Ga_1:= \Cay(G/N_1;CN_1/N_1)$ is normal edge-transitive by
\cite[Theorem 3(c)]{praeger}, we have $\Ga_1\in\caycs$.
Let $H\in\calA(G;C)$ and suppose that $N_1H\ne G$. Then $N_1H$ is a proper normal subgroup of $G$ and is   $\Aut(G;C)$-invariant, and hence $N_1H\in\calA(G;C)$. By the maximality of $N_1$, it follows that $N_1H=N_1$, that is to say, $H\leq N_1$. If this is true for all $H\in\calA(G;C)$, then $N_1$ is the unique maximal element of $\calA(G;C)$, and hence  $\calA_{\max}(G;C)=\{N_1\}$, $\Phi(G;C)=N_1$, and parts (i) and (ii) hold with $k=1, X=\Phi(G;C)=N_1$.

Thus we may assume that there exists  $H\in\calA(G;C)$ such that $N_1H= G$. Since
$N_1\ne G$, we have $H\ne1$ so $|G/H|<|G|$, and by induction parts (i) and (ii) hold for  $G/H$ with generating set $CH/H$. Hence there exists $k\geq2$ and subgroups $\ov{N}_2,\dots,\ov{N}_k\in\calA_{\max}(G/H;CH/H)$ such that parts (i)--(ii) hold with
$$
\ov{G}=G/H,\ \ov{C}=CH/H,\ k-1,\ \ov{N}_i,\ \ov{K}=\cap_{i=2}^k\ov{N}_i,\ \ov{\Ga}_i=\Cay(\ov{G}/\ov{N}_i; \ov{C}\ov{N}_i/\ov{N}_i),
$$
and  $\ov{\zeta}:\ov{G}/\ov{K}\rightarrow \prod_{i=2}^k \ov{G}/\ov{N}_i$
in place of $G, C, k, N_i, X, \Ga_i, \zeta$, respectively.
By induction, $\ov{\zeta}$ induces a graph isomorphism from $\Cay(\ov{G}/\ov{K}; \ov{C}\ov{K}/\ov{K})$ onto a full subdirect product of $\prod_{i=2}^k\ov{\Gamma}_i$.
Note that each $\ov{N}_i$ is of the form $N_i/H$ for a unique $N_i\in\calA_{\max}(G;C)$ such that $H\leq N_i$.  Moreover, $\ov{G}/\ov{N_i}\cong G/N_i$ and the natural map $\nu_i:xN_i\to \ov{x}\ov{N}_i$, where $\ov{x}=xH\in \ov{G}=G/H$, defines an isomorphism $G/N_i\to \ov{G}/\ov{N_i}$. If $x\in CN_i$, then
$\ov{x}=xH\in (CN_i)/H=\ov{CN_i}$, and hence
$(CN_i/N_i)\nu_i= \ov{CN_i}/\ov{N_i}$. It follows that $\nu_i$ induces a graph isomorphism from  $\Ga_i:=\Cay(G/N_i,  CN_i/N_i)$ to $\ov{\Ga}_i$. In particular $\Ga_i\in \caycs$ for $2\leq i\leq k$. Moreover the map
\[
\nu:(\ov{x}_2\ov{N}_2,\dots,\ov{x}_k\ov{N}_k)\to (\ov{x}_2\ov{N}_2\nu_2^{-1},\dots,\ov{x}_k\ov{N}_k\nu_k^{-1})=(x_2N_2,\dots,x_kN_k)
\]
is a group isomorphism $\nu: \prod_{i=2}^k \ov{G}/\ov{N}_i\to  \prod_{i=2}^k G/N_i$.
Setting $K:=\cap_{i=2}^k N_i$, we have $\ov{K}=K/H\in \calA(\ov{G};\ov{C})$ and $K\in\calA(G;C)$, and a natural isomorphism $\mu: G/K\to \ov{G}/\ov{K}$ given by $\mu: xK\to \ov{x}\ov{K}$.
The composition $\zeta':= \mu\circ \ov{\zeta}\circ \nu$ is a group homomorphism  $\zeta': G/K\to \prod_{i=2}^k G/N_i$ such that $\zeta':xK \rightarrow (xN_2,\dots,xN_k)$ for each $x\in G$.

\medskip\noindent
\emph{Proof of part (i).}  Let $X :=\cap_{i=1}^k N_i = N_1\cap K$. Then since each $N_i\in\calA_{\max}(G;C)$ we have $X\in\calA(G;C)$. Also since $G=N_1H$ and $H\leq K$, we have $G=N_1K$. The quotient $G/X$ has normal subgroups $K/X$ and $N_1/X$ that intersect trivially, and hence $G/X\cong K/X\times N_1/X$. Since also $K/X\cong KN_1/N_1=G/N_1$ and $N_1/X\cong N_1K/K= G/K$, it follows that the map $\zeta_1:xX \rightarrow (xN_1, xK)$ is an isomorphism from $G/X$ to $G/N_1\times G/K$. Now the map $\zeta_2: G/N_1\times G/K\to\prod_{i=1}^k G/N_i$ given by $(xN_1, xK)\zeta_2= (xN_1, (xK)\zeta')$ is also an isomorphism, and therefore the composition $\zeta:=\zeta_1\circ \zeta_2 : G/X\rightarrow \prod_{i=1}^k G/N_i$ is the natural isomorphism required in part (i).

\medskip\noindent
\emph{Proof of part (ii).} We have seen already that each $\Ga_i\in\caycs$. Now we prove the rest of part (ii). The map  $\zeta$ induces a graph isomorphism from
$\Sigma:=\Cay(G/X, CX/X)$ to $\Cay(\prod_{i=1}^k G/N_i,  (CX/X)\zeta)$.  To determine $(CX/X)\zeta$, note that, since $X=\cap_{i=1}^kN_i$ is contained in each $N_i$, for  each $c\in C$ and $x\in X$ the image $(cxX)\zeta = (cN_1,\dots,cN_k)$, and thus
$(CX/X)\zeta\subseteq \prod_{i=1}^k CN_i/N_i$. Hence $\Sigma\zeta$ is a subgraph of  $\Cay(\prod_{i=1}^k G/N_i, \prod_{i=1}^k CN_i/N_i)$ which, by Lemma~\ref{lem:cayprod}, is isomorphic to the direct product graph $\prod_{i=1}^k\Cay(G/N_i,CN_i/N_i)=\prod_{i=1}^k\Ga_i$. Moreover it follows from the definition of $\Sigma$ that the image of $\Sigma\zeta$ under the $j^{th}$ projection map $\pi_j:\prod_{i=1}^k\Ga_i\rightarrow \Ga_j$ is equal to $\Ga_j$ so $\Sigma\zeta$ is a subdirect product of $\prod_{i=1}^k\Ga_i$. Finally each edge of $\Ga_j$ is of the form $e=\{gN_j, cgN_j\}$ for some $c\in C$, $g\in G$, and so $e$ is the image under $\zeta\pi_j$ of the edge $\{gX, cgX\}$ of $\Sigma$. Thus $\Sigma\zeta$ is a full subdirect product and part (ii) holds.

Therefore parts (i) and (ii) have been proved by induction. It remains to prove part (iii).


\smallskip
Proof of part (iii). From all the sets of subgroups $N_1,\dots, N_k\in\calA_{\max}(G;C)$ such that parts (i) and (ii) hold with  $X=\cap_{i=1}^k N_i$ and
$\zeta:gX\rightarrow (gN_1,\dots,gN_k)$ and the $\Gamma_i=\Cay(G/N_i, CN_i/N_i)
\in\caycs$,  choose the subgroups $N_1,\dots,N_k$ such that $|X|$ is as small as possible. We claim that $X=\Phi(G;C)$. Note that, by the definitions of $\Phi(G;C)$ and $X$, it follows that $X\in\calA(G;C)$ and $\Phi(G;C)\leq X$. To prove the claim it is therefore sufficient to show that each $N\in\calA_{\max}(G;C)$ contains $X$, since this will imply that $\Phi(G;C)\geq X$ and hence $\Phi(G;C)=X$.

Suppose to the contrary that there exists $N\in\calA_{\max}(G;C)$ such that $X\not\leq N$. Then $NX$ properly contains $N$ and by the maximality of $N$,  we have $G=NX$.
Let $Y=N\cap X$. Then the same argument as in the proof of part (i) shows that $G/Y\cong N/Y\times X/Y\cong G/X\times G/N$, and that the map $\zeta_1: gY\rightarrow (gX, gN)$ is an isomorphism from $G/Y$ to $G/X\times G/N$; and further, that the composition of $\zeta_1$ followed by the map $(gX, gN)
\rightarrow ((gX)\zeta, gN)$ from $G/X\times G/N$ to $(\prod_{i=1}^kG/N_i)\times G/N$ is the natural isomorphism $\zeta'$ from $G/Y$ to $\prod_{i=1}^{k+1} G/N_i$, where $N_{k+1}:=N$. Note that $\Gamma_{k+1}:=\Cay(G/N_{k+1}, CN_{k+1}/N_{k+1})\in\caycs$ since $N\in\calA_{\max}(G;C)$. Then the argument in the proof of part (ii) shows that $\zeta'$ induces an isomorphism from $\Cay(G/Y, CY/Y)$ onto a full subdirect product of $\prod_{i=1}^{k+1}
\Ga_i$. Thus parts (i) and (ii) hold for $N_1,\dots, N_{k+1}\in\calA_{\max}(G;C)$, and we have $Y=\cap_{i=1}^{k+1}N_i = X\cap N_{k+1}$, a proper subgroup of $X$. This contradicts the minimality of $|X|$. Hence   each $N\in\calA_{\max}(G;C)$ contains $X$, and therefore $X=\Phi(G;C)$.
\end{proof}

Finally in this subsection we prove Theorem \ref{main decomposition}. \\

\begin{proof} \emph{of Theorem}~\ref{main decomposition}. 
By Theorem~\ref{main}(iii), for the group $X=\Phi(G;C)$, parts (i) and (ii) of  Theorem~\ref{main} hold. This yields everything we need for Theorem \ref{main decomposition} except for the assertion that $\calA(G/N_i, CN_i/N_i)=\{\{1\}\}$ for each $i$. To prove this last assertion, recall that each of the subgroups $N_i\in\calA_{\max}(G;C)$. Suppose to the contrary that, for some $i$, there exists a nontrivial $M_i\in \calA(G/N_i, CN_i/N_i)$.
Then $M_i = M/N_i$ where $M\unlhd G$, $N_i<M< G$,
and $M/N_i$ is invariant under $\Aut(G/N_i; CN_i/N_i)$. In particular $M/N_i$ is invariant under the subgroup of  $\Aut(G/N_i; CN_i/N_i)$ induced by the action of $\Aut(G;C)$, and hence $M$ is $\Aut(G;C)$-invariant. This implies that $M\in\calA(G;C)$, contradicting the maximality of $N_i$. Thus each $\calA(G/N_i, CN_i/N_i)=\{\{1\}\}$.
\end{proof}

\subsection{Transitive inverse-closed generating sets for a given group}

Inverse-closed generating sets for a group $G$, that are transitive in the sense of Definition~\ref{def:trans}, are often fairly natural subsets. For example we may take a full conjugacy class of involutions in a nonabelian simple group $G$. \emph{Let $\mathcal{T}(G)$ denote the set of all transitive, inverse-closed generating sets for $G$.} First we observe some simple properties of $\mathcal{T}(G)$.

\begin{lemma}\label{lem:tg}
Let $G, H$ be nontrivial finite groups. Then
\begin{enumerate}
\item[(a)] $\mathcal{T}(G)$ is $\Aut(G)$-invariant; and
\item[(b)] if $C\in\mathcal{T}(G)$ and $D\in\mathcal{T}(H)$, and if either $\Aut(G;C)$ is transitive on $C$ or $\Aut(H;D)$ is transitive on $D$, then $C\times D
\in\mathcal{T}(G\times H)$.
\end{enumerate}
\end{lemma}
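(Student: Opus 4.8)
The plan is to prove the two parts of Lemma~\ref{lem:tg} directly from Definition~\ref{def:trans}, since each is a matter of tracking how the automorphism-action orbits on the connection set behave under the relevant operation.

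For part (a), let $C\in\mathcal{T}(G)$ and let $\alpha\in\Aut(G)$; I want to show $C^\alpha\in\mathcal{T}(G)$. First I would note that $C^\alpha$ is again inverse-closed (since $(c^{-1})^\alpha=(c^\alpha)^{-1}$) and is again a generating set (since $\alpha$ is an automorphism). The key observation is that conjugation by $\alpha$ gives an isomorphism $\Aut(G;C)\to\Aut(G;C^\alpha)$, namely $\sigma\mapsto\alpha^{-1}\sigma\alpha$, and this isomorphism intertwines the action on $C$ with the action on $C^\alpha$ via the bijection $c\mapsto c^\alpha$. Consequently the $\Aut(G;C^\alpha)$-orbits on $C^\alpha$ are exactly the images under $\alpha$ of the $\Aut(G;C)$-orbits on $C$. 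Applying Definition~\ref{def:trans}: if $\Aut(G;C)$ is transitive on $C$ then $\Aut(G;C^\alpha)$ is transitive on $C^\alpha$; and if instead $C=C_+\cup C_-$ with $C_-=(C_+)^{-1}$, then $C^\alpha=(C_+)^\alpha\cup(C_-)^\alpha$ with $(C_-)^\alpha=((C_+)^{-1})^\alpha=((C_+)^\alpha)^{-1}$, so the two-orbit alternative holds for $C^\alpha$. Either way $C^\alpha\in\mathcal{T}(G)$, giving (a).

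For part (b), with $C\in\mathcal{T}(G)$, $D\in\mathcal{T}(H)$, I would first check that $C\times D$ is an inverse-closed generating set for $G\times H$: inverse-closure is immediate coordinatewise, and $\langle C\times D\rangle=G\times H$ follows from $\langle C\rangle=G$, $\langle D\rangle=H$ because $C,D$ are nonempty. The essential input is that $\Aut(G;C)\times\Aut(H;D)$ embeds naturally into $\Aut(G\times H;C\times D)$, acting on $C\times D$ as the product of the two coordinate actions. Now suppose without loss of generality that $\Aut(G;C)$ is transitive on $C$. Applying Definition~\ref{def:trans} to $D$ splits into two cases. If $\Aut(H;D)$ is also transitive on $D$, then the product group is transitive on $C\times D$ and we are in the first alternative of Definition~\ref{def:trans}. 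If instead $D=D_+\cup D_-$ with $D_-=(D_+)^{-1}$, then under the product action $C\times D$ breaks into exactly the two orbits $C\times D_+$ and $C\times D_-$ (the first coordinate being a single orbit by transitivity on $C$), and these satisfy $(C\times D_+)^{-1}=C^{-1}\times(D_+)^{-1}=C\times D_-$ using that $C$ is inverse-closed; this is precisely the two-orbit alternative of Definition~\ref{def:trans}. Hence $C\times D\in\mathcal{T}(G\times H)$.

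I do not expect a serious obstacle here, as both parts reduce to elementary orbit bookkeeping once the two structural facts are in place (conjugation identifying $\Aut(G;C)$ with $\Aut(G;C^\alpha)$ for (a), and the product-group embedding for (b)). The one point demanding care is the asymmetric hypothesis in (b): the two-orbit structure of $C\times D$ in the mixed case relies on the transitivity assumption on one factor to prevent the first coordinate from fragmenting the orbits, and on inverse-closure of that same transitive factor to verify $(C\times D_+)^{-1}=C\times D_-$. Keeping track of which factor carries the transitivity, and confirming that the resulting two orbits are genuinely swapped by inversion rather than each being separately inverse-closed, is the only place where the argument could go wrong if handled carelessly.
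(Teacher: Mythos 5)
Your proposal is correct and follows essentially the same route as the paper's proof: part (a) via the observation that applying $\alpha\in\Aut(G)$ carries the $\Aut(G;C)$-orbit structure on $C$ to the $\Aut(G;C^\alpha)$-orbit structure on $C^\alpha$ (the paper states this more tersely, while you make the conjugation isomorphism $\sigma\mapsto\alpha^{-1}\sigma\alpha$ explicit), and part (b) via the embedding $\Aut(G;C)\times\Aut(H;D)\leq\Aut(G\times H;C\times D)$ with exactly the paper's case split, including the key point that in the mixed case the two orbits $C\times D_+$ and $C\times D_-$ are interchanged by inversion because the \emph{transitive} factor $C$ is inverse-closed. Your closing caution about the asymmetric hypothesis matches the paper's remark that the argument fails when neither factor is transitive (four orbits arise), so nothing is missing.
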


\begin{proof}
(a)  It is easy to check that, for $\sigma\in\Aut(G)$ and  $C\in\mathcal{T}(G)$, the image
$C^\sigma$ is an inverse-closed generating set for $G$. Further, $C^\sigma$ is $\Aut(G;C^\sigma)$-invariant (since $C$ is $\Aut(G;C)$-invariant), and the fact that $C$ is transitive implies that $C^\sigma$ is transitive. Thus $C^\sigma\in\mathcal{T}(G)$.

(b) Suppose that $C\in\mathcal{T}(G)$ and $D\in\mathcal{T}(H)$. Then $C\times D$ is inverse closed, and generates $G\times H$. Also $\Aut(G;C)\times\Aut(H;D)\leq \Aut(G\times H; C\times D)$. If both $\Aut(G;C)$ is transitive on $C$ and $\Aut(H;D)$ is transitive on $D$, then $\Aut(G;C)\times\Aut(H;D)$ is transitive on $C\times D$, and hence  $C\times D
\in\mathcal{T}(G\times H)$. Suppose next that $\Aut(G;C)$ is transitive on $C$ and  $\Aut(H;D)$ has two orbits on $D$, namely $D_0$ and $D_0^{-1}$. Then $C\times D_0$ and $C\times D_0^{-1}$
are the two orbits of   $\Aut(G;C)\times\Aut(H;D)$ in $C\times D$, and $(C\times D_0)^{-1} = C\times D_0^{-1}$ since $C$ is inverse-closed. So again  $C\times D
\in\mathcal{T}(G\times H)$, and the same argument works in the case where $\Aut(H;D)$ is transitive on $D$ but $\Aut(G;C)$ is not transitive on $C$.
\end{proof}

We note that the argument in the proof of part (b) does not extend to the case where both  $\Aut(G;C)$ is not transitive on $C$ and  $\Aut(H;D)$ is not transitive on $D$, since in this case $\Aut(G;C)\times\Aut(H;D)$ has four orbits in $C\times D$.

We now give a family of examples to illustrate that a group may have several transitive inverse-closed generating sets, and in particular that some may be properly contained in others.

\begin{example}\label{ex:frat3}
{\rm
Let $G=\la a, b\mid a^n=b^2=1, bab=a^{-1}\ra$, the dihedral group $D_{2n}$ of order $2n\geq6$, as in \eqref{eqd}, and note that $\Aut(G)$ is given in \eqref{eqautd}. Consider the inverse-closed generating set $C_2=\{b,ba\}$ as in Example~\ref{ex:frat}.

\begin{enumerate}
\item[(a)]  As we saw  in Example~\ref{ex:frat}, $\Aut(G;C_2)= \langle \tau_{n-1}\vp\rangle \cong Z_2$ (with $\tau_{n-1}, \vp$ as in \eqref{eqvpt}), and $\tau_{n-1}\vp$ interchanges the two involutions in $C_2$. Hence $C_2$ is a transitive inverse-closed generating set for $G$, that is, $C_2\in\mathcal{T}(G)$.

\item[(b)] By Lemma~\ref{lem:tg}(a), for each $i=0,\dots,n-1$, the set $D_i:= C_2^{\vp^i}=\{ ba^i, ba^{i+1}\}\in\mathcal{T}(G)$. We note that each of these $n$ subsets $D_i$ has size $2$, and each of the $n$ non-central involutions $ba^j$ lies in exactly two of them.

\item[(c)] A larger natural subset lying in $\mathcal{T}(G)$ is $C'=\{ba^i\mid 0\leq i\leq n-1\}$,
the set of all non-central involutions in $G$: this set  $C'$  forms a single orbit of $\Aut(G)$, and hence $\Aut(G;C')=\Aut(G)$ and $C'\in\mathcal{T}(G)$.

\item[(d)] The generating sets in (b) and (c) correspond to quite different normal edge-transitive Cayley graphs: each $\Cay(G;D_i) \cong C_{2n}$ (a cycle on $2n$ vertices), while $\Cay(G;C') \cong K_{n,n}$ (a complete bipartite graph).
\end{enumerate}
}
\end{example}

\section{Transitive inverse-closed generating sets for dihedral groups}\label{sec:dehedral}

It is instructive to take the analysis in Section~\ref{sec:netg} further and characterise the set $\mathcal{T}(G)$ for groups $G$ in some specific infinite families.
We do this in Theorem~\ref{lem:d2n} for the family of dihedral groups, which in particular helps place the graphs in Example~\ref{ex:frat3} into a broader context. We note that Talebi~\cite[Theorem 3.3]{Talebi} showed that the only normal edge-transitive Cayley graph $\Cay(D_{2n};C)$ of valency $n$ is the graph in Example~\ref{ex:frat3}(d): this example arises in part (c) of Theorem~\ref{lem:d2n}.

\begin{theorem}\label{lem:d2n}
Let $G=\la a, b\mid a^n=b^2=1, bab=a^{-1}\ra$, the dihedral group $D_{2n}$ of order $2n\geq6$, as in $\eqref{eqd}$, with $\Aut(G)$ as in $\eqref{eqautd}$. Let $C\in\mathcal{T}(G)$, $A_0=\Aut(G;C)$, and $A_0\cap \langle\vp\rangle = \la \vp^r\rangle$ where $r$ divides $n$.

\begin{enumerate}
\item[(a)] Then there exists a non-empty subset $I\subseteq \{0,1,\dots,r-1\}$ such that $C=\cup_{i\in I} ba^i\langle a^r\rangle$, and $A_0/\langle\vp^r\rangle$ acts transitively on the set $\mathcal{S}=\{ba^i\langle a^r\rangle | i\in I\}$.

\item[(b)] Up to isomorphism (that is replacing $C$ by $C^\sigma$ for some $\sigma\in \Aut(G)$), we may assume that $0\in I$.

\item[(c)] If $I=\{0\}$, then $r=1$, and $C$ is as in Example~$\ref{ex:frat3}$(c), with $A_0=\Aut(G)$.

\item[(d)] If $I=\{0,i\}$, for some $i\ne 0$, then $\gcd(i,r)=1$, there exists a positive integer $k<n$ such that $\gcd(k,n)=1$ and $r \mid (k+1)$, and $C= b\langle a^r\rangle \cup ba^i\langle a^r\rangle$ with $\langle\vp^r, \tau_k\vp^i\rangle$ a subgroup of $A_0$ acting transitively on $C$. In particular if $r=n$, then for some $\sigma\in\Aut(G)$, $C^\sigma = \{b,ba\}$  as in Example~$\ref{ex:frat3}$(a), with $A_0^\sigma=\langle \tau_{n-1}\vp\rangle\cong Z_2$.
\end{enumerate}
\end{theorem}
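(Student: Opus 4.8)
The plan is to translate the whole problem into the affine action of $\Aut(G)$ on the set of reflections. Since $n\geq 3$, the subgroup $\la a\ra$ is the unique cyclic subgroup of order $n$ and hence is characteristic, so every automorphism preserves the partition of $G\setminus\{1\}$ into rotations (powers of $a$) and reflections (the elements $ba^j$). As $C$ generates $G$ it cannot consist only of rotations, so it contains a reflection. Each $\Aut(G;C)$-orbit lies wholly in the rotations or wholly in the reflections, and a reflection-orbit is inverse-closed because reflections are involutions; consequently the two-orbit alternative $C=C_+\cup C_-$ with $C_-=C_+^{-1}$ of Definition~\ref{def:trans} cannot occur (a reflection-orbit $C_+$ would force $C_-=C_+$, and two rotation-orbits would give $C\subseteq\la a\ra$). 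Hence $A_0=\Aut(G;C)$ acts transitively on $C$ and $C$ consists entirely of reflections. The computational engine is the identity $(ba^j)^{\tau_k\vp^m}=ba^{kj+m}$, which exhibits the $\Aut(G)$-action on the reflection indices $j\in\mathbb{Z}_n$ as the affine group $\mathrm{AGL}(1,\mathbb{Z}_n)$, with $\vp^r$ acting as the translation $j\mapsto j+r$.

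For part (a), $\la\vp\ra\unlhd\Aut(G)$ gives $\la\vp^r\ra=A_0\cap\la\vp\ra\unlhd A_0$, and $\la\vp^r\ra$ acts regularly on each coset $ba^i\la a^r\ra$, so its orbit through $ba^i$ is exactly that coset. Since $C$ is $\la\vp^r\ra$-invariant it is a union of such cosets, $C=\bigcup_{i\in I}ba^i\la a^r\ra$ with $I\subseteq\{0,\dots,r-1\}$, and because $A_0$ is transitive on $C$ and permutes these cosets as blocks, $A_0/\la\vp^r\ra$ is transitive on $\mathcal{S}$. Part (b) is then a normalisation: for $i_0\in I$ the translation $\vp^{-i_0}$ sends $ba^{i_0}$ to $b$ and commutes with $\vp^r$, so replacing $C$ by $C^{\vp^{-i_0}}$ preserves $r$ and the coset structure while ensuring $0\in I$.

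For part (c), if $I=\{0\}$ then $\la C\ra=\la a^r,b\ra$, and requiring this to be all of $G$ forces $\la a^r\ra=\la a\ra$, i.e.\ $r=1$; then $C$ is the full class of non-central involutions and $A_0=\Aut(G)$, as in Example~\ref{ex:frat3}(c). For part (d) with $I=\{0,i\}$, the index differences among the reflections generate $\la a^{\gcd(i,r)}\ra$, so generation of $G$ yields $\gcd(i,r)=1$. Transitivity of $A_0/\la\vp^r\ra$ on the two-point set $\mathcal{S}$ produces some $\tau_k\vp^m\in A_0$ interchanging the two cosets; reducing $(ba^j)^{\tau_k\vp^m}=ba^{kj+m}$ modulo $r$ gives $m\equiv i$ and $(k+1)i\equiv 0\pmod r$, whence $r\mid(k+1)$ by $\gcd(i,r)=1$. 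Composing $\tau_k\vp^m$ with a suitable power of $\vp^r\in A_0$ corrects the additive part from $m$ to $i$, so $\tau_k\vp^i\in A_0$ and $\la\vp^r,\tau_k\vp^i\ra$ is transitive on $C$.

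Finally, when $r=n$ the cosets are singletons, $C=\{b,ba^i\}$ with $\gcd(i,n)=1$; applying $\tau_{i^{-1}}$ (which fixes $b$ and sends $ba^i$ to $ba$) normalises $C$ to $\{b,ba\}$, while $r\mid(k+1)$ forces $k=n-1$ and a one-line stabiliser computation shows $|A_0|=2$, giving $A_0^\sigma=\la\tau_{n-1}\vp\ra\cong Z_2$ as in Example~\ref{ex:frat3}(a). The main obstacle I anticipate is the bookkeeping of part (d): passing from the abstract transitivity of $A_0/\la\vp^r\ra$ to the explicit automorphism $\tau_k\vp^i$ with the exact constraints $\gcd(k,n)=1$ and $r\mid(k+1)$ relies both on the affine reformulation and on using the translations $\la\vp^r\ra\leq A_0$ to adjust the additive part. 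By comparison, the first-paragraph reduction --- eliminating rotations and the two-orbit case --- is the conceptual crux but is brief once the characteristic property of $\la a\ra$ is recorded.
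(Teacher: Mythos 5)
Your proposal is correct and follows essentially the same route as the paper's proof: the same reduction showing $A_0$ is transitive on $C\subseteq b\langle a\rangle$ via the inverse-closed reflection orbit, the same decomposition of $C$ into $\langle\vp^r\rangle$-orbits (the cosets $ba^i\langle a^r\rangle$), the same normalisation by a power of $\vp$, the same generation arguments giving $r=1$ in (c) and $\gcd(i,r)=1$ in (d), and the same extraction of $\tau_k\vp^i$ with $r\mid(k+1)$ followed by the identical endgame for $r=n$. Your ${\rm AGL}(1,\mathbb{Z}_n)$ framing is just a repackaging of the paper's formula $(ba^i)^{\tau_k\vp^j}=ba^{ik+j}$, and your explicit adjustment of the additive part by powers of $\vp^r$ makes precise a step the paper leaves implicit.
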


\begin{proof}
(a)  Recall the explicit description of $\Aut(G)$ given in \eqref{eqvpt} and \eqref{eqautd}. Let $C\in\mathcal{T}(G)$.
Since $C$ generates $G$, it follows that $C$ must contain an element of $G\setminus\la a\ra$,
the set of $n$ non-central involutions of $G$. Then since $C$ is a transitive inverse-closed generating set for $G$ and $C$ contains an involution, it follows from  Definition~\ref{def:trans} that
$A_0$ is transitive on $C$ so $C\subseteq G\setminus\la a\ra= b\langle a\rangle$.
Now $A_0$ contains $\langle\vp^r\rangle$ and the $\langle\vp^r\rangle$-orbits in $b\langle a\rangle$ are the cosets $ba^i\langle a^r\rangle$ for $0\leq i\leq r-1$. Thus $C=\cup_{i\in I} ba^i\langle a^r\rangle$ for some $I\subseteq \{0,1,\dots,r-1\}$. The normal subgroup   $\langle\vp^r\rangle$ of $A_0$ fixes each of these cosets setwise, and so, since $A_0$ is transitive on $C$, it follows that $A_0/\langle\vp^r\rangle$ acts transitively on the set $\mathcal{S}$, proving part (a).

(b) By Lemma~\ref{lem:tg}(a), the image of $C$ under any element of $\Aut(G)$ is again an element of $\mathcal{T}(G)$. Let $i\in I$. Then, by \eqref{eqvpt}, $\vp^{n-i}$ maps $ba^i$ to $b$ and hence maps $ba^i\langle a^r\rangle$ to $ba^0\langle a^r\rangle$. Thus we may assume that $0\in I$, proving (b).

(c) Suppose that $I=\{0\}$, that is, $C=b\langle a^r\rangle$. Since $C$ is a generating set for $G$ it follows that $G=\langle b, a^r\rangle$ and hence (since $r$ divides $n$) that $r=1$.  Thus  $C$ is the set of all non-central involutions in $G$,  as in Example~\ref{ex:frat3}(c), and $A_0=\Aut(G)$, proving (c).

(d) Suppose that $I=\{0,i\}$ for some $i\ne 0$, that is, $C= b\langle a^r\rangle \cup ba^i\langle a^r\rangle$.  Since $C$ is a generating set for $G$ it follows that $G=\langle b, a^r, a^i\rangle$ and hence $a\in\langle a^r, a^i\rangle$. This implies that $\gcd(i,r)=1$. Now $A_0$ is transitive on $C$, and since $\langle\vp^r\rangle$ fixes each $\langle a^r\rangle$-coset setwise, it follows that there exist $k, j$ with $1\leq k\leq n-1$, $\gcd(k,n)=1$, and $0\leq j\leq r-1$, such that $\tau_k\vp^j\in A_0\setminus \langle \vp^r\rangle$ and $\tau_k\vp^j$  interchanges $ b\langle a^r\rangle$ and $ba^i\langle a^r\rangle$. By \eqref{eqvpt}, $\tau_k\vp^j: b\to ba^j$ and hence $j=i$ (recall $0\leq j<r$). Also $\tau_k\vp^i:ba^i\to ba^{i(k+1)}$ and hence $ba^{i(k+1)}\in  b\langle a^r\rangle$. This implies that $r$ divides $i(k+1)$ and since $\gcd(i,r)=1$ this means that $r \mid (k+1)$. If these conditions on $k$ are satisfied then  $A_1:=\langle\vp^r, \tau_k\vp^i\rangle$
is a subgroup of $A_0$ that acts transitively on $C$. Finally if $r=n$, then $C=\{b,ba^i\}$ and $\gcd(i,n)=1$. Thus there exists $\ell$ such that $1\leq \ell\leq n-1$ and $i\ell\equiv 1\pmod{n}$, and hence $\tau_\ell\in T<\Aut(G)$ and $\tau_\ell :b\to b, ba^i\to ba$. Therefore $C^{\tau_\ell}=\{b,ba\}$,  as in Example~\ref{ex:frat3}(a). The subgroup of index 2 in $A_0$ which fixes $C$ pointwise must fix each of $a, b$ and hence must be trivial. It follows that $A_0\cong Z_2$, and in fact $A_0^{\tau_\ell} = \Aut(G;C^{\tau_\ell})=\langle \tau_{n-1}\vp\rangle$.
\end{proof}

\subsection{Four-valent normal edge-transitive Cayley graphs of dihedral groups}\label{sub:val4}

For the special case where $n$ is an odd prime, Talebi \cite[Theorem 3.5]{Talebi} classified the $4$-valent normal edge-transitive Cayley graphs of $D_{2n}$, or equivalently, the  subsets of $\mathcal{T}(D_{2n})$ of cardinality $4$. We will complete this classification for arbitrary $n$.  Recall \eqref{eqd}-\eqref{eqautd}, and in particular
$$
G=\la a, b\mid a^n=b^2=1, bab=a^{-1}\ra
$$
and $\Aut(G)=A\rtimes T$ with generators as in \eqref{eqautd}, so that, for $k$ with $\gcd(k,n)=1$ and all $i, j$,
\begin{equation}\label{eq-gen}
(a^i)^{\tau_k\varphi^j}=a^{ik},\quad  (ba^i)^{\tau_k\varphi^j}=ba^{ik+j}.
\end{equation}
Let $C\in\mathcal{T}(G)$ of cardinality $|C|=4$, so that $\Gamma:=\Cay(G,C)$ is a $4$-valent
normal edge-transitive Cayley graph for $G$. All such graphs are of this form. By Theorem~\ref{lem:d2n}(b), up to isomorphism we may (and shall) assume that $b\in C$.
In \cite[Conjecture, p.451]{Talebi} Talebi  conjectures that each example must belong to one of two families which we describe in Example~\ref{ex:talebi}. (Note that there are two misprints in line 3 of
\cite[Conjecture, p.451]{Talebi}, namely the symbol $p$ should be $n$, and  the second family should be the one given in \cite[Theorem 3.4]{Talebi}.)

\begin{example}\label{ex:talebi}
Let $G=D_{2n}$ and $\Aut(G)$ be as above, with $n\geq3$, and define $C$ as in (a) or (b).
\begin{enumerate}
\item[(a)] $C=\{b, ba, ba^i, ba^{1-i}\}$, where $2\leq i\leq n-1$ such that $\gcd(2i-1, n)=1$ and $2i(i-1)\equiv 0\pmod{n}$;
\item[(b)] $C=\{b, ba, ba^{k+1}, ba^{k^2+k+1} \}$, where $1\leq k\leq n-2$, $\gcd(k,n)=1$, and $1+k+k^2+k^3\equiv 0\pmod{n}$.
\end{enumerate}

\end{example}

We establish some properties of these examples. In particular we decide when a subset $C$ arises in more than one way in Example~\ref{ex:talebi}. The fact that $C\in\mathcal{T}(G)$ for these subsets is proved in \cite[Theorem 3.4]{Talebi} for $C$  in case (a), and in \cite[Theorem 3.5]{Talebi} for $C$ in case (b) (but only if $n$ is prime) by exhibiting certain elements of $\Aut(G;C)$. We determine the subgroups $\Aut(G;C)$ completely, using the notation in \eqref{eq-gen}.

\begin{proposition}\label{prop:talebi}
Let $n, G, C$ be as in Example~$\ref{ex:talebi}$ (a) or (b), and let $A_0=\Aut(G;C)$. Then

\begin{enumerate}
\item[(a)] $|C|=4$ and $C\in\mathcal{T}(G)$. Moreover,
\begin{enumerate}
\item[(i)] if $C$ is as in Example~$\ref{ex:talebi}$ (a), then
\begin{equation*}
A_0= \begin{cases}
\langle \tau_{n/2+1}, \tau_{n-1}\vp\rangle \cong D_8 & \text{if $i= n/2$ or $n/2+1$, and $n\equiv 0\pmod{4}$,}\\
\langle \tau_{n-1}\varphi, \tau_{2i-1}\varphi^{1-i}\rangle \cong Z_2\times Z_2 & \text{otherwise};
\end{cases}
\end{equation*}

\item[(ii)] if $C$ is as in Example~$\ref{ex:talebi}$ (b), then
\begin{equation*}
A_0= \begin{cases}
\langle \tau_{n/2+1}, \tau_{n/2-1}\vp\rangle \cong D_8 & \text{if $k= n/2-1$ and $n\equiv 0\pmod{4}$,}\\
\langle \tau_{k}\varphi\rangle\cong Z_4 & \text{otherwise}.
\end{cases}
\end{equation*}

\end{enumerate}

\item[(b)] $C$ arises in both cases (a) and (b) of Example~$\ref{ex:talebi}$  if and only if $n\equiv 0\pmod{4}$ and
 $C=\{b, ba, ba^{n/2}, ba^{n/2+1}\}$, which lies in Example~$\ref{ex:talebi}$  (a) with $i=n/2$ or $i=n/2+1$, and in Example~$\ref{ex:talebi}$  (b) with $k=n/2-1$.

 \item[(c)] if $C$ is as in Example~$\ref{ex:talebi}$ (b) then the parameter $k$ is uniquely determined; while if $C$ is as in Example~$\ref{ex:talebi}$ (a), then $C$ arises for exactly two parameters $i, j$, namely $C=\{b, ba, ba^i, ba^{1-i}\}=\{b, ba, ba^j, ba^{1-j}\}$ where $i+j=n+1$.
\end{enumerate}

\end{proposition}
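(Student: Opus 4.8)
The plan is to verify all three parts by direct computation with the automorphisms $\tau_k\vp^j$ acting on $C$ via the explicit formula \eqref{eq-gen}, namely $(ba^i)^{\tau_k\vp^j}=ba^{ik+j}$. For part (a), the strategy is to compute $A_0=\Aut(G;C)$ as the setwise stabiliser of $C$ in $\Aut(G)=A\rtimes T$. Since $b\in C$ and every element of $C$ is a non-central involution of the form $ba^s$, I would first observe that an automorphism $\tau_k\vp^j$ lies in $A_0$ precisely when the induced permutation $s\mapsto ks+j\pmod n$ preserves the set $S$ of exponents occurring in $C$. For case (a) we have $S=\{0,1,i,1-i\}$; I would set up the condition that $\{0,1,i,1-i\}$ is mapped to itself, solving the resulting congruences for $(k,j)$. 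The congruences $\gcd(2i-1,n)=1$ and $2i(i-1)\equiv0\pmod n$ guarantee that the two candidate involutions $\tau_{n-1}\vp$ (sending $s\mapsto -s+1$, which fixes $S$ by swapping $0\leftrightarrow1$ and $i\leftrightarrow 1-i$) and $\tau_{2i-1}\vp^{1-i}$ (sending $s\mapsto (2i-1)s+(1-i)$) both preserve $S$. I would then check these generate a group isomorphic to $Z_2\times Z_2$ generically, and detect the exceptional enlargement to $D_8$ exactly when $i\equiv n/2$ or $n/2+1$ and $4\mid n$, where the extra element $\tau_{n/2+1}$ (of order $2$ in $T$ since $(n/2+1)^2\equiv1\pmod n$) also stabilises $S$. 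Case (a)(ii) for Example~\ref{ex:talebi}(b) proceeds identically, with $S=\{0,1,k+1,k^2+k+1\}$ and the key automorphism $\tau_k\vp$ of order $4$ cyclically permuting $C$ as $b\to ba\to ba^{k+1}\to ba^{k^2+k+1}\to b$, the last step using $1+k+k^2+k^3\equiv0\pmod n$; the exceptional $D_8$ arises when $k=n/2-1$ with $4\mid n$.

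For part (b), the approach is to overlap the two descriptions. A set $C$ of case (a) has the form $\{b,ba,ba^i,ba^{1-i}\}$, while a set of case (b) has the form $\{b,ba,ba^{k+1},ba^{k^2+k+1}\}$; I would equate these (up to reordering the last two elements) and solve for when both parameter constraints hold simultaneously. The natural matching is $k+1\equiv i$ and $k^2+k+1\equiv 1-i\pmod n$, which combined with the constraints forces $2i\equiv n/2\cdot(\text{something})$; I expect the clean outcome to be $i\equiv n/2$ (or $n/2+1$) and $k\equiv n/2-1$, available only when $4\mid n$, giving the single set $\{b,ba,ba^{n/2},ba^{n/2+1}\}$. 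This dovetails with the $D_8$ cases isolated in part (a), which is the structural reason the overlap occurs exactly there.

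For part (c), the uniqueness of $k$ in case (b) follows from the observation that $C$ determines the ordered $4$-cycle structure of the order-$4$ generator $\tau_k\vp$ (outside the $D_8$ exception), so $k$ is read off from how $A_0$ permutes $C$; in the $D_8$ case part (b) already pins down $k$. For case (a) the symmetry $i\leftrightarrow 1-i$ is built into the set, so I would show $C=\{b,ba,ba^i,ba^{1-i}\}$ is unchanged under replacing $i$ by $j:=1-i$, and that these are the only two representations by checking that the unordered pair $\{i,1-i\}$ of "extra" exponents is an invariant of $C$; writing $j=1-i$ gives $i+j=1$, i.e.\ $i+j\equiv n+1\pmod n$ as stated. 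The main obstacle I anticipate is bookkeeping the exceptional $4\mid n$ cases correctly and confirming that no automorphism outside the listed generators stabilises $C$ — that is, proving the stabiliser is exactly $Z_2\times Z_2$, $Z_4$, or $D_8$ and not larger. This amounts to showing the permutation action of $A_0$ on the four exponents is faithful (the pointwise stabiliser fixing $a$ and $b$ is trivial, as in the proof of Theorem~\ref{lem:d2n}(d)), so $A_0$ embeds in $\Sym(4)$ and its order is bounded by the number of exponent-set-preserving maps, which the congruences control.
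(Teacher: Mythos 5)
Your overall plan is sound and, for parts (a) and (b), essentially reproduces the paper's proof: the paper likewise first checks $|C|=4$ from the congruence conditions, exhibits the transitive subgroup $B=\langle \tau_{n-1}\vp,\tau_{2i-1}\vp^{1-i}\rangle\cong Z_2\times Z_2$ (case (a)) or $B=\langle\tau_k\vp\rangle\cong Z_4$ (case (b)), and then bounds $A_0$ by the factorisation $A_0=B(A_0)_b$ together with the observation that an automorphism fixing both $b$ and $ba$ fixes $a$ and hence is trivial --- exactly your faithfulness-on-$C$ argument --- before solving the congruences for a possible extra element $\tau_\ell\in(A_0)_b$ to isolate the $D_8$ cases $i=n/2$ or $n/2+1$ (resp.\ $k=n/2-1$) with $4\mid n$. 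Where you genuinely diverge is part (c) for Example~\ref{ex:talebi}(b): the paper argues by direct arithmetic, deducing from the two representations that $n=2k+1$ and $n=2\ell+1$, whence $k=\ell$; you instead read $k$ off the group structure, noting that once $A_0$ is pinned down by part (a), there is a unique order-$4$ element of $A_0$ sending $b\mapsto ba$ (in the $Z_4$ case because the action is regular; in the $D_8$ case because the second element of the coset is a reflection of order $2$, or alternatively because a second parameter $\ell\ne n/2-1$ would force $A_0\cong Z_4$ by (a)(ii), contradicting $A_0\cong D_8$), so $\tau_\ell\vp=\tau_k\vp$ and hence $\ell=k$ by comparing images of $a$. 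That shortcut is valid and arguably cleaner, at the cost of depending on the exact determination of $A_0$, whereas the paper's computation is self-contained.

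A few loose ends you should tighten when writing this up. In part (b) the step you summarise as ``forces $2i\equiv n/2\cdot(\text{something})$'' is the crux: one must combine $n\mid 2k(k+1)$ (from $2i(i-1)\equiv0$, valid for either matching of the last two elements) with $n\mid(k+1)(k^2+1)$ to get $n\mid 2(k+1)$, hence $n=2(k+1)$, and then use $\gcd(k,n)=1$ with $n$ even to force $k$ odd and so $4\mid n$. In part (c) for case (a) you should verify that $j=n+1-i$ actually satisfies the defining congruences of Example~\ref{ex:talebi}(a) (it does, since $\gcd(2j-1,n)=\gcd(2i-1,n)$ and $2j(j-1)\equiv 2i(i-1)\pmod n$) and that $i\ne j$: if $i=j$ then $2i-1=n$, contradicting $\gcd(2i-1,n)=1$. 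Finally, your order-$4$ argument needs the small check that $(\tau_\ell\vp)^2\ne 1$, which holds because $(\tau_\ell\vp)^2$ sends $b$ to $ba^{\ell+1}$ and $1\leq\ell\leq n-2$ gives $\ell+1\not\equiv0\pmod n$.
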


\begin{proof}
(a) First we prove that $|C|=4$. The conditions on the parameters $i, k$ in Example~\ref{ex:talebi} imply that the first three elements of $C$ are pairwise distinct. In case (a) if $ba^{1-i}$ is equal to one of the other elements of $C$ then that element must be $ba^i$ and $i\equiv 1-i\pmod{n}$, so $i=(n+1)/2$. However the conditions on $i$ then imply that $n$ divides $i-1$, which is a contradiction. In case (b), if $ba^{k^2+k+1}$  is equal to one of the other elements of $C$ then $n$ divides $k^2+k+1, k^2+k, k^2$ according as that element is $b, ba, ba^{k+1}$, respectively, and then the fact that $1+k+k^2+k^3\equiv 0\pmod{n}$ implies that $n$ divides $1, k+1, k+1$ respectively, contradicting the fact that $1\leq k\leq n-2$. Thus $|C|=4$.

Now $A_0$ contains the following subgroup $B$ acting regularly on $C$ (which can be checked using \eqref{eq-gen}): for $C$ in case (a), $B=\langle \tau_{n-1}\varphi, \tau_{2i-1}\varphi^{1-i}\rangle\cong Z_2\times Z_2$ (noting that, under the conditions in case (a), $(\tau_{n-1}\vp)^2=(\tau_{2i-1}\vp^{1-i})^2=1$ and the two generators commute); and for $C$ in case (b),   $B=\langle \tau_{k}\varphi\rangle\cong Z_4$ (noting the conditions on $k$).  Thus $C\in\mathcal{T}(G)$.  Also, since $B$ is transitive on $C$, we have a factorisation   $A_0=B(A_0)_b$, and the stabiliser $(A_0)_b$ is contained in $(\Aut(G))_b=T$, as defined in \eqref{eqautd}. The stabiliser of $ba$ in $(A_0)_b$ fixes both $b$ and $a$, and hence is trivial, so $|(A_0)_b|\leq 3$.  Suppose that $(A_0)_b\ne 1$. As this subgroup is contained in $T$, each nontrivial element is equal to $\tau_\ell$, for some $\ell$ with $\gcd(\ell,n)=1$ and $1<\ell<n$, and we have $(ba)^{\tau_\ell}=ba^\ell\in C\setminus\{b,ba\}$.

Suppose first that $C$ is in case (a), so $ba^\ell\in\{ba^i, ba^{1-i}\}$.  Consider first the possibility $ba^\ell=ba^i$. Then $\ell=i$  (since $1<i,\ell<n$) so $\gcd(i,n)=1$ and hence, by the condition on $i$ in Example~!\ref{ex:talebi}, $2(i-1)\equiv 0\pmod{n}$. However $0<2(i-1)<2n$ and it follows that $2(i-1)=n$, so $n$ is even and $\ell=i=n/2+1$. Since $\gcd(\ell,n)=1$ and $n$ is even, this means in particular that $\ell$ is odd and hence $n\equiv 0\pmod{4}$. Thus $\tau_\ell^2=1$ and $A_0=B\langle \tau_\ell\rangle=\langle \tau_{n-1}\varphi, \varphi^{n/2}, \tau_{n/2+1}\rangle = \langle \tau_{n/2+1}, \tau_{n-1}\vp\rangle\cong D_8$, as in (a)(i) (noting that $\tau_{n/2+1} (\tau_{n-1}\vp)=\tau_{n/2-1}\vp$ and $(\tau_{n/2-1}\vp)^2=\vp^{n/2}$). Similarly if $ba^\ell=ba^{1-i}$ then $\ell=n+1-i$  (since $1<i,\ell<n$), and this time $\gcd(i-1,n)=1$, so by the condition on $i$ in Example~!\ref{ex:talebi}, $2i\equiv 0\pmod{n}$, yielding $i=n/2$ and $\ell=n/2+1$. Again since $\gcd(\ell,n)=1$ and $n$ is even,
we must have $\ell$ odd and $n\equiv 0\pmod{4}$. The rest of the argument is the same, and we find that $A_0$ is as in (a)(i). These arguments also show that, if $C$ is as in (a), and if in addition $n\equiv 2\pmod{4}$ whenever $i= n/2$ or $n/2+1$, then
$A_0=B$ as in (a)(i).

Suppose now that $C$ is in case (b), so $ba^\ell\in\{ba^{k+1}, ba^{k^2+k+1}\}$. We claim that $ba^\ell\ne ba^{k+1}$. Suppose to the contrary that $ba^\ell=ba^{k+1}$, so $\ell=k+1$ and $\gcd(k+1,n)=1$. Since $n$ divides $k^3+k^2+k+1 = (k^2+1)(k+1)$ it follows that $k^2+1\equiv 0\pmod{n}$. Thus $ba^{k^2+k+1}=ba^{k}$, and is mapped by $\tau_\ell$ to $ba^{k\ell}=ba^{k^2+k}=ba^{k-1}$. Hence $ba^{k-1} = ba$ or $ba^{k+1}$. If $ba^{k-1} = ba$
then $k=2, \ell=3, n=k^2+1=5$, but then $\tau_\ell: ba \to ba^3\to ba^4\to ba^2\to ba$ does not fix $C$ setwise. Thus   $ba^{k-1} = ba^{k+1}$, but this implies that $a^2=1$ so $n=2$, which is a contradiction, proving the claim. Therefore $ba^\ell= ba^{k^2+k+1}$, and no element of $B$ maps $ba$ to $ba^{k+1}$. This means that $\tau_\ell$ must interchange $ba$ and $ba^{k^2+k+1}$ and fix $ba^{k+1}$. Thus $ba^{k+1}=(ba^{k+1})^{\tau_\ell}=ba^{(k+1)\ell}$ and hence $k+1\equiv (k+1)(k^2+k+1)\equiv k^2+k\pmod{n}$ (using the condition on $k$ in Example~$\ref{ex:talebi}$), so $k^2\equiv 1\pmod{n}$ and $\ell\equiv k^2+k+1\equiv k+2\pmod{n}$. Now the condition in Example~$\ref{ex:talebi}$) becomes $0\equiv k^3+k^2+k+1\equiv 2k+2\pmod{n}$, so $n=2(k+1)/r$ for some integer $r$, and since $k+1<n$ this implies that $r=1$ so $n$ is even and $k=n/2-1$, $\ell=n/2+1$. Further, since $n$ divides $k^2-1$ we must have $n\equiv 0\pmod{4}$, and so $A_0= \langle \tau_{n/2-1}\varphi, \tau_{n/2+1}\rangle\cong D_8$ (note that $(\tau_{n/2-1}\varphi )^2=\varphi^{n/2}$ and the involution $\tau_{n/2+1}$ inverts $\tau_{n/2-1}\varphi$). Thus (a)(ii) holds. In case (b), for all other $k, n$ the group $A_0$ is equal to $B$ and again (a)(ii) holds.

(b) Suppose next that $C$ arises in both cases (a) and (b) of Example~\ref{ex:talebi} for some $i, k$, respectively.
Then $ba^{k+1}$ is equal to either $ba^{i}$ or $ba^{1-i}$. In either case the condition $2i(i-1)\equiv 0\pmod{n}$ implies that $n$ divides $2k(k+1)$. Since $(k+1)(k^2+1)=1+k+k^2+k^3\equiv 0\pmod{n}$, it follows that $n$ divides $(k+1)(2k^2-2(k^2+1))=-2(k+1)$. Thus, since $1\leq k\leq n-2$, we must have $n=2(k+1)$, and so $n$ is even, $k=n/2 - 1$, and $i=n/2$ or $i=n/2+1$ according as $ba^{k+1}=ba^{i}$ or $ba^{1-i}$. The condition in Example~\ref{ex:talebi} (b) on $k$ implies that $\gcd(k,n)=1$, and since $n$ is even this means that $k$ is odd, and hence $n/2$ is even, that is $n\equiv 0\pmod{4}$, and part (b) holds.  The converse is easily checked.

(c) Suppose that $C=\{b, ba, ba^{k+1}, ba^{k^2+k+1}\} =\{b, ba, ba^{\ell+1}, ba^{\ell^2+\ell+1}\}$, where $1\leq k, \ell\leq n-2$, $k\ne \ell$, and  $1+k+k^2+k^3\equiv 1+\ell+\ell^2+\ell^3\equiv 0\pmod{n}$.
Then $ba^{k+1}\ne ba^{\ell+1}$ and so $ba^{k+1} = ba^{\ell^2+\ell+1}$, and hence $k+1\equiv \ell^2+\ell+1\pmod{n}$. Similarly $k^2+k+1\equiv \ell+1\pmod{n}$. Thus, using $1+k+k^2+k^3\equiv 0\pmod{n}$ repeatedly we have
\[
k\equiv \ell(\ell+1)\equiv (k^2+k)(k^2+k+1) \equiv k^3+k^2\equiv -k-1\pmod{n}
\]
so $n$ divides $2k+1$. Since $1<2k+1<2n$ this implies that $n=2k+1$. A similar computation yields $n=2\ell+1$, so $k=\ell$, which is a contradiction. This proves the first assertion in part (c).

Finally suppose that $C=\{b, ba, ba^i, ba^{1-i}\}=\{b, ba, ba^j, ba^{1-j}\}$,  where $2\leq i<j\leq n-1$ such that $\gcd(2i-1, n)=\gcd(2j-1, n)=1$ and $2i(i-1)\equiv 2j(j-1)\equiv 0\pmod{n}$. Then $ba^i\in\{ba^j, ba^{1-j}\}$, and since $ba^i\ne ba^j$ (as $1< i<j<n$), we must have
$ba^i=ba^{1-j}$ which implies that $i=n+1-j$. Conversely if $C=\{b, ba, ba^i, ba^{1-i}\}$ is as in  Example~\ref{ex:talebi}(a) and we set $j=n+1-i$ then $2\leq j\leq n-1$, $\gcd(2j-1, n)=1$ and $2j(j-1)\equiv 0\pmod{n}$. Thus $C$ can be written with either parameter $i$ or $j$. Note that $i, j$ are distinct since $i=j=n+1-i$ implies that $2i-1=n$, which contradicts the condition $\gcd(2i-1, n)=1$.
\end{proof}

\begin{remark}\label{rem:talebi1}
{\rm
(a) It follows from Proposition~\ref{prop:talebi}(c) that, if $C$ is as in Example~$\ref{ex:talebi}$ (a),
then we may assume that the parameter $i$ satisfies $1\leq i\leq n/2$.

(b) We note that there are infinitely many possibilities arising from Example~$\ref{ex:talebi}$ (a), for example, for each even $n\geq 4$ we may take $i=n/2$. On the other hand $n$ cannot be an odd prime power, but some odd composite integers $n$ are possible, for example $n=21, i=7$ gives an example. Similarly there are infinitely many examples arising from Example~$\ref{ex:talebi}$ (b), namely, for each $k\geq2$, we may take $n=k^2+1$, or indeed we may take $n=d(k^2+1)$ for any divisor $d$ of $k+1$.

(c) We note that, by Proposition~\ref{prop:talebi}(a)-(b), taking $n=4$ and $k=1$ in Example~$\ref{ex:talebi}$ (b), or $n=4$ and $i=1$ or $2$ in Example~$\ref{ex:talebi}$ (a), we obtain the graph $\Cay(D_8,C)\cong K_{4,4}$ with $A_0=\langle \tau_3, \varphi\rangle =\Aut(D_8)\cong D_8$.

(d) In his conjecture Talebi writes $p$ in place of $n$ for the subsets we describe in Example~\ref{ex:talebi}(b), and we believe this is simply a misprint. These examples for $n$ prime were identified in \cite[Theorem 3.5]{Talebi}, but they definitely can arise for composite values of $n$. For example, as noted in part (b).
}
\end{remark}

In addition to the two infinite families of examples presented in Example~\ref{ex:talebi}, we have identified a third family of examples and we describe these next.

\begin{example}\label{ex:new}
Let $G=D_{2n}$ and $\Aut(G)$ be as above, with $n\geq5$, and define $C=\{b, ba^i, ba^{j}, ba^{k} \}$, where
\begin{enumerate}
\item[(i)] $1< i, j, k < n$ such that $i, j, k$ are pairwise coprime;

\item[(ii)] $\gcd(x,n)>1$ for each $x\in\{i,j,k\}$; and moreover


\item[(iii)] there exist $\ell, m$ such that $\ell^2\equiv m^2\equiv 1\pmod{n}$ and $\ell, m, \ell m\not\equiv 1\pmod{n}$, and also $k\equiv j\ell+i\equiv im +j\pmod{n}$ and $i(\ell+1)\equiv j(m+1)\equiv 0\pmod{n}$.
\end{enumerate}

\end{example}

\begin{remark}\label{rem:talebi}
{\rm
We note that there
 are additional consequences of the congruence conditions not written explicitly in Example~\ref{ex:new}. For example, since $\ell^2\equiv 1\pmod{n}$, we have $\gcd(\ell,n)=1$, and hence the congruence $k\equiv j\ell+i\pmod{n}$ implies that $\gcd(k-i,n)=\gcd(j\ell,n)=\gcd(j,n)$; similarly $\gcd(j-k,n)=\gcd(i,n)$. Also, using the congruences in part (iii), $j\equiv j\ell^2 \equiv (k-i)\ell \equiv k\ell + i\pmod{n}$, and so $\gcd(i-j,n)=\gcd(-k\ell,n)=\gcd(k,n)$. We present a minimal set of conditions in Example~\ref{ex:new} to make checking as simple as possible. 
}
\end{remark}

\begin{proposition}\label{prop:new}
Let $n, G, C, i, j, k, \ell, m$ be as in Example~$\ref{ex:new}$, and let $A_0=\Aut(G;C)$. Then

\begin{enumerate}
\item[(a)] $|C|=4$, $C\in\mathcal{T}(G)$, and if $n$ is even then $n/2\not\in\{i,j,k\}$;

\item[(b)] $A_0= \langle
\tau_{\ell}\vp^i, \tau_m\vp^j\rangle\cong Z_2\times Z_2$.
\end{enumerate}
Moreover, $C^\sigma$ does not occur in
Example~$\ref{ex:talebi}$, for any $\sigma\in \Aut(G)$.
\end{proposition}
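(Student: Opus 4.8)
The plan is to prove (a), (b), and the non-occurrence claim in turn, with the bulk of the work (and the only genuinely delicate step) lying in identifying $A_0$ and in the final invariant argument.

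\emph{Part (a).} First I would observe that every $ba^x$ is an involution, so $C$ is automatically inverse-closed, and that $|C|=4$: here $b=ba^0$ differs from $ba^i,ba^j,ba^k$ since $1<i,j,k<n$, while $ba^i,ba^j,ba^k$ are pairwise distinct because $i,j,k$ are pairwise coprime and each exceeds $1$ (so no two are congruent mod $n$). Since $b\cdot ba^i=a^i$ and $b\cdot ba^j=a^j$ lie in $\langle C\rangle$ with $\gcd(i,j)=1$, we get $a\in\langle C\rangle$ and hence $\langle C\rangle=G$. For the claim $n/2\notin\{i,j,k\}$ when $n$ is even, I would set $d_x=\gcd(x,n)$; since $d_x\mid x$ and $i,j,k$ are pairwise coprime, the $d_x$ are pairwise coprime and each exceeds $1$ by condition (ii), so $d_id_jd_k\mid n$ and $d_id_jd_k\ge 2\cdot3\cdot5$. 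If some $x$ equalled $n/2$, then $d_x=n/2$ would force the product of the other two (coprime, each $>1$) gcds to divide $2$, which is impossible. Transitivity of $C$ will fall out of part (b).

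\emph{Part (b).} Using the relation $\vp^s\tau_c=\tau_c\vp^{sc}$ (equivalently $\tau_c^{-1}\vp\tau_c=\vp^c$) from $\Aut(G)=A\rtimes T$ together with \eqref{eq-gen}, I would first check that $\sigma_1:=\tau_\ell\vp^i$ and $\sigma_2:=\tau_m\vp^j$ preserve $C$ by tracking exponents: $\sigma_1$ sends $b\mapsto ba^i$, $ba^i\mapsto ba^{i(\ell+1)}=b$, $ba^j\mapsto ba^{j\ell+i}=ba^k$ and $ba^k\mapsto ba^{k\ell+i}=ba^j$ (the last two using $k\equiv j\ell+i$, $\ell^2\equiv1$, $i(\ell+1)\equiv0$), so $\sigma_1$ acts on $C$ as $(b,ba^i)(ba^j,ba^k)$, and symmetrically $\sigma_2$ acts as $(b,ba^j)(ba^i,ba^k)$. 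I would then verify $\sigma_1^2=\tau_{\ell^2}\vp^{i(\ell+1)}=\id$, $\sigma_2^2=\id$, and $\sigma_1\sigma_2=\tau_{\ell m}\vp^{im+j}=\tau_{\ell m}\vp^{k}=\tau_{\ell m}\vp^{j\ell+i}=\sigma_2\sigma_1$; so $B:=\langle\sigma_1,\sigma_2\rangle$ is generated by two commuting involutions, and it has order $4$ since $\sigma_1,\sigma_2,\sigma_1\sigma_2$ are nontrivial (as $\ell,m,\ell m\not\equiv1\pmod n$). Thus $B\cong Z_2\times Z_2$ acts regularly on $C$, which in particular shows $A_0$ is transitive and completes (a).

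It remains to prove $A_0=B$, and this is the step I expect to be the crux. As $B$ is regular on $C$, there is a factorisation $A_0=B\,(A_0)_b$ with $B\cap(A_0)_b=1$, so it suffices to show $(A_0)_b=1$. Any element of $A_0$ fixing $b$ lies in $\Aut(G)_b=T$, hence equals some $\tau_s$ with $\gcd(s,n)=1$, and (fixing $b$) it permutes $\{ba^i,ba^j,ba^k\}$. The key observation is that $\tau_s$ preserves the exponent-invariant $\gcd(\cdot,n)$ since $\gcd(sx,n)=\gcd(x,n)$, while $\gcd(i,n),\gcd(j,n),\gcd(k,n)$ are pairwise distinct (pairwise coprime and each $>1$); hence $\tau_s$ fixes each of $ba^i,ba^j,ba^k$, giving $s\equiv1\pmod{n/\gcd(x,n)}$ for $x\in\{i,j,k\}$. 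The number-theoretic heart is then $\mathrm{lcm}\bigl(n/\gcd(i,n),\,n/\gcd(j,n),\,n/\gcd(k,n)\bigr)=n$: for each prime $p\mid n$, at most one of the pairwise-coprime $\gcd(x,n)$ is divisible by $p$, so the full $p$-part of $n$ survives in at least one factor $n/\gcd(x,n)$. Therefore $s\equiv1\pmod n$, i.e.\ $\tau_s=\id$, whence $(A_0)_b=1$ and $A_0=B$.

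\emph{The non-occurrence claim.} To any $4$-set $\{ba^{x_1},\dots,ba^{x_4}\}$ of non-central involutions I would attach the multiset $M=\{\gcd(x_s-x_t,n):1\le s<t\le4\}$ of six values. Under $\sigma=\tau_c\vp^e\in\Aut(G)$ the exponents transform as $x\mapsto cx+e$, so each difference $x_s-x_t$ is scaled by the unit $c$ and $M$ is unchanged; thus $M$ is an $\Aut(G)$-invariant. For our $C$ the differences are, up to sign, $i,j,k,i-j,j-k,k-i$, and by condition (ii) together with the identities $\gcd(i-j,n)=\gcd(k,n)$ etc.\ of Remark~\ref{rem:talebi}, every entry of $M$ exceeds $1$. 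On the other hand, each set in Example~\ref{ex:talebi} contains both $b=ba^0$ and $ba=ba^1$, so its multiset contains $\gcd(1,n)=1$. Since $M$ is $\Aut(G)$-invariant and contains no $1$, it can never equal the multiset of a Talebi set, and hence no $C^\sigma$ occurs in Example~\ref{ex:talebi}.
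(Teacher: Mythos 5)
Your proof is correct, and while your verification that $\tau_\ell\vp^i$ and $\tau_m\vp^j$ preserve $C$ and generate a Klein four group acting regularly on $C$ coincides with the paper's computation, you handle the two genuinely delicate steps by different routes. For $A_0=B$, the paper first proves $A_0\cap\langle\vp\rangle=1$ by a cycle-type analysis of $\vp^u$ acting on $C$ (this is where the fact $n/2\notin\{i,j,k\}$ and the hypothesis $n\geq5$ get consumed), and then embeds $A_0$ into the abelian group $T$, concluding via the fact that a transitive abelian permutation group is regular. You instead kill the point stabiliser directly: $(A_0)_b\leq\Aut(G)_b=T$, the quantity $\gcd(x,n)$ attached to the exponent is a $\tau_s$-invariant separating $ba^i,ba^j,ba^k$ (their gcds with $n$ being pairwise coprime and each $>1$, hence pairwise distinct), so $\tau_s$ fixes each of them, and your lcm computation over the primes dividing $n$ forces $s\equiv1\pmod n$; this avoids both the $n\geq5$ case analysis and any use of the $n/2$ fact in part (b). (Your divisor-product argument that $n/2\notin\{i,j,k\}$, still needed for the statement of (a), also differs from the paper's parity argument but is equally valid.) For the non-occurrence claim the divergence is larger: the paper invokes Proposition~\ref{prop:talebi} to reduce, via $A_0\cong Z_2\times Z_2$, to Example~\ref{ex:talebi}(a), and then runs a two-case computation on the possible shapes of $\sigma$; you instead construct the multiset of values $\gcd(x_s-x_t,n)$ over pairs of exponents, observe that it is an $\Aut(G)$-invariant of a set of reflections because automorphisms act on exponents by $x\mapsto cx+e$ with $c$ a unit, and note that every entry for $C$ exceeds $1$ (condition (ii) plus the identities of Remark~\ref{rem:talebi}) whereas any set in Example~\ref{ex:talebi} contains both $b$ and $ba$ and hence the entry $\gcd(1,n)=1$. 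Your invariant argument is shorter, independent of Proposition~\ref{prop:talebi}, and gives a reusable separating tool, while the paper's computation extracts slightly more information along the way about how a putative $\sigma$ would have to act. Both arguments are complete.
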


\begin{proof}
By the conditions on $i,j,k$ it follows that $C$ consists of four pairwise distinct involutions, and $C$ generates $G$. Next we show that, if $n$ is even then $n/2\not\in\{i,j,k\}$. Suppose to the contrary that, say, $n=2i$. Then, using the condition that  $i, j$ are coprime, $\gcd(n,j)=\gcd(2i,j)=\gcd(2,j)$, and hence since $\gcd(j,n)>1$ it follows that $j$ is even. A similar proof shows that $k$ is even, and this is a contradiction since $j, k$ are coprime.

Now we check that both $\tau_{\ell}\vp^i$ and $\tau_m\vp^j$ leave $C$ invariant.  Let $\sigma= \tau_{\ell}\vp^i$. Then, using \eqref{eq-gen}, $b^\sigma = ba^i$, $(ba^i)^\sigma=ba^{i\ell+i}=b$ (since $i(\ell+1)\equiv 0\pmod{n}$),  $(ba^j)^\sigma=ba^{j\ell+i}=ba^k$ (since $j\ell+i\equiv k\pmod{n}$), and
 $(ba^k)^\sigma=ba^{k\ell+i}=ba^j$, since
 $$
 k\ell+i\equiv (j\ell+i)\ell+i =
 j\ell^2+i(\ell+1)\equiv j.1+0=j\pmod{n}.
 $$
 Hence $\sigma= \tau_{\ell}\vp^i\in A_0$, and also $|\sigma|=2$ (since $A_0$ is faithful on $C$). Now let $\sigma= \tau_{m}\vp^j$. Then $b^\sigma = ba^j$, $(ba^j)^\sigma=ba^{jm+j}=b$ (since $j(m+1)\equiv 0\pmod{n}$),  $(ba^i)^\sigma=ba^{im+j}=ba^k$ (since $im+j\equiv k\pmod{n}$), and
 $(ba^k)^\sigma=ba^{km+j}=ba^i$, since
 $$
 km+j\equiv (im+j)m+j =
 im^2+j(m+1)\equiv i.1+0=i\pmod{n}.
 $$
 Hence $\sigma= \tau_{m}\vp^j\in A_0$, and  again, since $A_0$ is faithful on $C$, we conclude that
 $|\tau_{m}\vp^j|=2$ and $A_0$ contains $A:=\langle\tau_{\ell}\vp^i, \tau_m\vp^j\rangle$. The computations above show that the generators act on $C$ as double transpositions and that, in this action, $A$ induces the Klein four group on $C$. Since $A$ is faithful on $C$ it follows that  $A\cong Z_2\times Z_2$, and $A$ is transitive on $C$, so $C\in\mathcal{T}(G)$, proving part (a).

We claim that $A_0\cap \langle \varphi\rangle = 1$. Suppose to the contrary that  $A_0$ contains $\varphi^u$ with $0<u<n$. By \eqref{eqvpt}, for each $x$, $(ba^x)^{\varphi^u}=ba^{u+x}\ne ba^x$, and hence the permutation induced by $\varphi^u$ on $C$ is either a product of two $2$-cycles, or a $4$-cycle. 
In the former case, since  $\varphi^u: b\to ba^u \to ba^{2u}$, we have $b=ba^{2u}$ and hence $n$ divides $2u$. This implies that $n$ is even, $u=n/2$, and $ba^{n/2}\in C$ so $n/2\in\{i,j,k\}$, which  contradicts part (a). Hence $\varphi^u$ induces a $4$-cycle on $C$, and since $\varphi^u: b\to ba^u \to ba^{2u} \to ba^{3u} \to ba^{4u}$, it follows that $n$ divides $4u$. Since the images $b, ba^u, ba^{2u}, ba^{3u}$ are pairwise distinct, $n$ does not divide $2u$, so $n=4y$ where $y$ divides $u$.  Without loss of generality $i=u$ and $j\equiv 2u\pmod{n}$. Thus $y$ divides $\gcd(i,j)$, and since $i,j$ are coprime this implies that $y=1$. Hence $n=4$, which is a contradiction. Thus $A_0\cap \langle\varphi\rangle=1$.
Therefore $A_0\cong
A_0\langle\vp\rangle/\langle\vp\rangle\leq \Aut(G)/\langle\vp\rangle\cong T$, as in \eqref{eqautd}. In particular $A_0$ is abelian.   Thus, since $A_0$ acts faithfully on $C$ and contains $A$, we conclude that $A_0=A$, proving part (b).

 Finally we prove the last assertion. Suppose to the contrary that, for some $\sigma\in\Aut(G)$, the image $C^\sigma$ is as in Example~\ref{ex:talebi}. In particular,  $C^\sigma$ contains $b$ and $ba$. Since $A_0 = Z_2\times Z_2$, it follows from Proposition~\ref{prop:talebi} that $C^\sigma$ is in Example~\ref{ex:talebi}(a). Suppose first that $\sigma=\varphi^u$ with $0\leq u<n$. Then $C^\sigma$ consists of the four elements $(ba^x)^\sigma = ba^{x+u}$ for $x\in\{0,i,j,k\}$. If $u=0$ then $\sigma$ is the identity, and this is not possible since $ba\not\in C$. Hence $u\ne 0$, so without loss of generality, $b=ba^{i+u}$, that is $u
\equiv -i\pmod{n}$. Then for some $x\in\{0, j, k\}$, $ba=ba^{x+u}=ba^{x-i}$. However this implies that $x-i\equiv 1\pmod{n}$; and, using Remark~\ref{rem:talebi} and condition (ii) of Example~\ref{ex:new},  $\gcd(i,n)>1$, $\gcd(j-i,n)=\gcd(k,n)>1$, and $\gcd(k-i.n)=\gcd(j,n)>1$, and we have a contradiction.  Hence $\sigma=\tau_y\varphi^u$ for some $y, u$ with $\gcd(y,n)=1$ and  $y\not\equiv 1\pmod{n}$.
If $u=0$ then $b=b^\sigma$, and without loss of generality $ba=(ba^i)^\sigma = ba^{iy}$, implying that $iy\equiv 1\pmod{n}$, which is not possible since $\gcd(i,n)>1$. Hence $u>0$ and without loss of generality we may assume that $b=(ba^i)^\sigma = ba^{iy+u}$, so $u\equiv -iy\pmod{n}$. Thus, for some $x\in\{0,j,k\}$, $ba=(ba^x)^\sigma = ba^{xy+u} = ba^{(x-i)y}$, which implies that $\gcd(x-i, n)=1$, and the same argument which we just gave leads to a contradiction.
\end{proof}

In Subsection~\ref{sub-bk} we exhibit an explicit infinite family of possibilities for $C$ which satisfy all the conditions of Example~\ref{ex:new}. Thus this new construction disproves Talebi's conjecture.
We now use the framework from Theorem~\ref{lem:d2n} to prove Theorem~\ref{4valent}, which classifies all $4$-element subsets of
$\mathcal{T}(D_{2n})$, and hence all $4$-valent  normal edge-transitive Cayley graphs for dihedral groups.

\begin{proof} \emph{ of Theorem \ref{4valent}.~}\quad
Let $G=D_{2n}$ with $n\geq3$, as in $\eqref{eqd}$, and $\Aut(G)$ as in $\eqref{eqautd}$. If $C$ is as in
Example~$\ref{ex:talebi}$ or Example~$\ref{ex:new}$, then  $\Cay(G;C)$ is a $4$-valent normal edge-transitive Cayley graph by Proposition~\ref{prop:talebi} or Proposition~\ref{prop:new}, respectively. The same is true for the images of such subsets $C$ under elements of $\Aut(G)$, by Lemma~\ref{lem:tg}(a).

Now we prove the converse. So suppose that $C$ is an inverse-closed generating set for $G$ and  $\Cay(G;C)$ is a $4$-valent normal edge-transitive Cayley graph. Then $|C|=4$ and, by \cite[Proposition 1(b)]{praeger},  $C\in\mathcal{T}(G)$. Let $A_0=\Aut(G;C)$, and $A_0\cap \langle\vp\rangle = \la \vp^r\rangle$ where $r$ divides $n$.
By Theorem~\ref{lem:d2n}, $C=\cup_{i\in I} ba^i\langle a^r\rangle$ for some $I\subseteq \{0,1,\dots,r-1\}$ and we may assume that $0\in I$. Moreover, $A_0/\langle\vp^r\rangle$ acts transitively on the set
$\{ba^i\langle a^r\rangle | i\in I\}$. In particular $4=|C|=|I|n/r\geq |I|$, so one of the following holds: (i) $I=\{0\}$ and $r=n/4$, (ii) $|I|=2$ and $r=n/2$, or (iii) $|I|=4$ and $r=n$.

\medskip\noindent
\emph{Case (i)}\ Here, by Theorem~\ref{lem:d2n}(c), $r=1$, so $n=4r=4$,   $C=b\la a\ra$, $A_0=\Aut(G)$, and $\Cay(G;C)\cong K_{4,4}$, see Example~\ref{ex:frat3}(d). By Remark~\ref{rem:talebi1}, $C$ is as in Example~\ref{ex:talebi}.

\medskip\noindent
\emph{Case (ii)}\ In this case $I=\{0,i\}$ for some $i$, and $C = b\{1,a^i,a^{n/2}, a^{n/2+i}\}$. By Theorem~\ref{lem:d2n}(d), $1=\gcd(i,r)=\gcd(i,n/2)$, and there exists a positive integer $k<n$ such that $\gcd(k,n)=1$, and $r=n/2$ divides $k+1$. This implies that $k=n/2-1$ or $k=n-1$. Moreover. by Theorem~\ref{lem:d2n}(d), $B:=\langle \vp^{n/2}, \tau_{k}\vp^i\rangle\leq A_0$ and $B$ acts transitively on $C$.

Suppose first that $k=n/2-1$, so $n$ is even. Then, since $\gcd(k,n)=1$, it follows that $k$ is odd, and hence $n\equiv 0\pmod{4}$ (since $k=n/2-1$). This implies that $i$ is odd (since $\gcd(i,n/2)=1$), and hence $\gcd(i,n)=1$. Thus there exists a positive integer $\ell<n$ such that $i\ell\equiv 1\pmod{n}$, so $\gcd(\ell,n)=1$ and applying the map $\tau_\ell\in\Aut(G)$ defined in \eqref{eqvpt} to $C$ we have $C^{\tau_\ell} = b\{1,a,a^{n/2}, a^{n/2+1}\}$, as in Example~\ref{ex:talebi} (see Proposition~\ref{prop:talebi}(b)).

Suppose now that $k=n-1$. Again $n$ is even since $r=n/2$. Suppose first that $\gcd(i,n)=1$. Then, as in the previous paragraph, there exists a positive integer $\ell<n$ such that  $i\ell\equiv 1\pmod{n}$ and $\gcd(\ell,n)=1$, and hence $C^{\tau_\ell} = b\{1,a,a^{n/2}, a^{n/2+1}\}$, as in Example~\ref{ex:talebi}. Suppose on the other hand that $\gcd(i,n)>1$. Since $\gcd(i,n/2)=1$, this implies that $i$ is even, $n/2$ is odd, and $\gcd(i,n)=2$. Thus there exists   a positive integer $\ell'<n/2$ such that $i\ell'\equiv 1\pmod{n/2}$, whence $\gcd(\ell',n/2)=1$. If $\ell'$ is odd set $\ell=\ell'$, and if $\ell'$ is even set $\ell=\ell'+n/2$. Then in either case, $\ell$ is odd, $1\leq \ell<n$, and $\gcd(\ell,n)=1$.
Thus $\tau_\ell\in\Aut(G)$. Now we have $i\ell \equiv 1\pmod{n/2}$ with $n/2, \ell$ odd and $n, i$ even; it follows that $i\ell\equiv 1+n/2\pmod{n}$. Therefore $(a^i)^{\tau_\ell}=a^{i\ell}=a^{n/2+1},
(a^{n/2})^{\tau_\ell}=a^{(n/2)\ell}=a^{n/2}$, and $(a^{n/2+i})^{\tau_\ell}=a^{(n/2+i)\ell}=a^{n/2+n/2+1}=a$. Hence $C^{\tau_\ell}=b\{1, a, a^{n/2}, a^{n/2+1}\}$, as in Example~\ref{ex:talebi}.

\medskip\noindent
\emph{Case (iii)}\ In this case $|I|=4$ and $r=n$, and we have $C=b\{1, a^{i_1}, a^{i_2}, a^{i_3} \}$, for distinct $i_1, i_2, i_3$ between $1$ and $n-1$. Also $A_0\cap\langle\vp\rangle=1$ so $A_0\cong
A_0\langle\vp\rangle/\langle\vp\rangle\leq \Aut(G)/\langle\vp\rangle\cong T$. In particular $A_0$ is abelian and each nontrivial element of $A_0$ is of the form $\tau_k\vp^i$ for some $k, i$ with $\gcd(k,n)=1$ and $0\leq i<n$. Since $C$ generates $G$ it follows that $A_0$ acts faithfully on $C$,
so $A_0$ is a transitive abelian permutation group on $C$. This implies that $|A_0|=4$ and either $A_0$ is cyclic or a Klein four group.

Suppose first that $A_0$ is cyclic, say $A_0=\langle \tau_k\vp^i\rangle\cong Z_4$. Without loss of generality we may assume that $\tau_k\vp^i:b\to ba^{i_1} \to ba^{i_2} \to ba^{i_3} \to b$. By \eqref{eq-gen}, $i_1=i$, $i_2\equiv i(k+1)\pmod{n}$, $i_3\equiv i(k^2+k+1)\pmod{n}$, and $i(k^3+k^2+k+1)\equiv 0\pmod{n}$. This implies in particular that $G=\langle C\rangle\leq \langle b, a^i\rangle$, and we conclude that $\gcd(i,n)=1$. Thus there exists $\ell$ such that $\gcd(\ell,n)=1$ and $i\ell\equiv 1\pmod{n}$, and replacing $C$ by $C^{\tau_\ell}$ we may assume in addition that $i=1$.  Hence $C$ is as in Example~\ref{ex:talebi}(b).

Suppose now that $A_0\cong Z_2\times Z_2$. Then $A_0$ has generators $\tau_\ell\vp^i$ and $\tau_{m}\vp^j$ such that $\tau_\ell\vp^i: b\leftrightarrow ba^{i_1},  ba^{i_2}\leftrightarrow  ba^{i_3}$ and
$\tau_{m}\vp^j : b\leftrightarrow  ba^{i_2},  ba^{i_1}\leftrightarrow  ba^{i_3}$. This implies that $i_1=i, i_2=j$, $i_3\equiv j\ell +i\equiv  im+j\pmod{n}$, and $i(\ell+1)\equiv j(m+1)\equiv 0\pmod{n}$. Thus $a^{i_3}=a^{j\ell+i}\in\langle a^i, a^j\rangle$, and since $C$ generates $G$ it follows that
$G=\langle b, a^i, a^j\rangle$, and hence $\gcd(i,j)=1$. A similar argument shows that $\gcd(i,i_3)=\gcd(j,i_3)=1$. We will write $k=i_3$, and we note that the third involution in $A_0$ is $(\tau_\ell\varphi^i)(\tau_m\varphi^j)=\tau_{\ell m}\varphi^{im+j}=\tau_{\ell m}\varphi^{k}$.
Now $(\tau_\ell\varphi^i)^2=1$, and \eqref{eqvpt} implies that $\ell^2\equiv 1\pmod{n}$. Similarly
$m^2\equiv (\ell m)^2\equiv 1\pmod{n}$. Also since $A_0\cap\langle\varphi\rangle = 1$ (as we are in case
(iii)) we have $\ell, m, \ell m\not\equiv 1\pmod{n}$. This means that all the conditions of
Example~\ref{ex:new} parts (i) and (iii) hold. Using the congruences we have just derived, we have, modulo $n$,
\[
k(\ell m+1)\equiv (im+j)(\ell m+1) = i\ell m^2 + im + j\ell m +j \equiv -im^2+im -j\ell + j\equiv -im^2 +i = -i(m^2-1)\equiv 0.
\]
Thus the conditions on the pairs $(i,k), (j,\ell), (k,\ell m)$ are `symmetrical'.

We claim that, \emph{if any one of $\gcd(i,n)=1$, $\gcd(j,n)=1$, or $\gcd(k,n)=1$, 
then $C^\sigma$ is as in   Example~\ref{ex:talebi}(a), for some $\sigma\in\Aut(G)$.}
If $\gcd(i,n)=1$, then there exists $u$ such that $iu\equiv 1\pmod{n}$, and the image of $C$ under $\tau_u$ is $\{b, ba, ba^{ju}, ba^{i_3u}\}$. Similar arguments show that, if  $\gcd(j,n)=1$, or $\gcd(k,n)=1$,  then the image of $C$ under some $\sigma\in\Aut(G)$ contains both $b$ and $ba$. 
Thus (because of the symmetry of the conditions on $i, j, k$) in order to prove the claim we may assume that $i=1$ and hence $2\leq j,k\leq n-1$. Then, since
$\ell+1=i(\ell+1)\equiv 0\pmod{n}$, we have $\ell=n-1$, and hence $k\equiv j\ell+i \equiv -j+1\pmod{n}$. Since also $k\equiv im+j=m+j\pmod{n}$, we have $m\equiv k-j\equiv 1-2j\pmod{n}$, so $\gcd(2j-1,n)=1$ and $0\equiv j(m+1)\equiv 2j(1-j)\pmod{n}$. Thus $C=\{b,ba,ba^j, ba^{1-j}\}$ is as in Example~\ref{ex:talebi}(a), proving the claim.

To complete the proof of Theorem~\ref{4valent}, we may therefore assume also that all the conditions of Example~\ref{ex:new} part (ii) hold, so $C$ is as in Example~\ref{ex:new}, completing the proof of Theorem \ref{4valent}.
\end{proof}

\subsection{Infinite family of Examples}\label{sub-bk}
 By Remark~\ref{rem:talebi1}(b), there are infinitely many $4$-valent normal edge-transitive
 Cayley graphs of dihedral groups arising in each part of Example \ref{ex:talebi}. Here we present an explicit infinite family of examples for our new construction in Example~\ref{ex:new}. Proof that we obtain infinitely many examples simply depends on the fact that there are infinitely many odd primes $p$ not of the form $p=2^r-1$, that is, primes which are not \emph{Mersenne primes}.

\begin{lemma}\label{mersenne}
There are infinitely many odd primes which are not Mersenne primes.
\end{lemma}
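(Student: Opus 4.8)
The plan is to avoid any delicate counting of primes by instead exhibiting a single residue class which contains infinitely many primes but \emph{no} Mersenne numbers whatsoever; the class $2 \pmod 3$ does the job. First I would record the elementary congruence observation that no integer of the form $2^r-1$ is congruent to $2$ modulo $3$. Since $2\equiv -1 \pmod 3$, we have $2^r\equiv(-1)^r\pmod 3$, so
\[
2^r-1\equiv(-1)^r-1\pmod 3,
\]
which is $\equiv 0$ when $r$ is even and $\equiv 1$ when $r$ is odd; in either case it is never $\equiv 2\pmod 3$. Note that this step uses neither the primality of $2^r-1$ nor the familiar fact that the exponent must itself be prime: it is a statement about \emph{all} integers $2^r-1$. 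Its immediate consequence is that every prime $p\equiv 2\pmod 3$ fails to be a Mersenne prime.

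Next I would establish that there are infinitely many primes in the class $2\pmod 3$. This is the one step requiring a genuine (if classical) argument, and I would give the Euclid-style proof rather than invoke Dirichlet's theorem, so as to keep the lemma self-contained. Given any finite list $p_1,\dots,p_k$ of primes congruent to $2$ modulo $3$, I would form $N=3p_1\cdots p_k-1$. Then $N\equiv 2\pmod 3$ and $\gcd(N,3)=1$; since a product of primes all $\equiv 1\pmod 3$ is itself $\equiv 1\pmod 3$, the integer $N$ must have at least one prime factor $q\equiv 2\pmod 3$. As $q\mid N$ while $q\nmid 3p_1\cdots p_k$, this $q$ differs from each $p_i$ and from $3$, so no finite list can be complete.

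Finally I would assemble the two pieces: the class $2\pmod 3$ contains infinitely many primes, at most one of which (namely $2$ itself) is even, so it contains infinitely many \emph{odd} primes; and by the first step none of these is a Mersenne prime. This proves the lemma. I do not expect a serious obstacle here — the only points needing care are the routine verifications in the Euclid step that $q$ is new and unequal to $3$. (As an alternative, one could argue by density: the integers $2^r-1$ up to $x$ number at most $\log_2(x+1)$, while $\pi(x)$ grows faster than $\log x$ by Chebyshev's estimate, so all but $O(\log x)$ primes are non-Mersenne; but the congruence argument above is cleaner and entirely elementary, and is the one I would write up.)
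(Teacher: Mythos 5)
Your proof is correct, but it takes a genuinely different route from the paper. The paper invokes Bertrand's Postulate: for each $r\geq 2$, taking $m=2^r-1\geq 3$ yields a prime $p$ with $2^r-1<p<2^{r+1}-2$, so $p$ lies strictly between consecutive Mersenne numbers and cannot itself be Mersenne; distinct values of $r$ give distinct primes. Your argument instead isolates the residue class $2\pmod 3$: since $2^r-1\equiv(-1)^r-1\pmod 3$ is always $0$ or $1$ modulo $3$, no Mersenne number lies in that class, and your Euclid-style argument (forming $N=3p_1\cdots p_k-1$ and extracting a prime factor $q\equiv 2\pmod 3$, which is new since $q\mid N$ forces $q\nmid 3p_1\cdots p_k$) correctly shows the class contains infinitely many primes; your final observation that at most one of these, namely $2$, is even closes the gap to \emph{odd} primes. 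Each step checks out, including the subtle point that a prime factor of $N$ cannot be $3$ and cannot all be $\equiv 1\pmod 3$. What the two approaches buy is different: yours is entirely self-contained and elementary, avoiding Chebyshev-type input altogether, whereas the paper's is shorter given the cited reference and has the side benefit of locating a non-Mersenne prime in every dyadic interval $(2^r-1,\,2^{r+1}-2)$, which makes the pairwise distinctness of the produced primes immediate. Your parenthetical density alternative is also sound but, as you say, the congruence argument is the cleaner write-up.
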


\begin{proof}
This follows easily from the number theoretic result known as Bertrand's Postulate (see for example \cite[Theorem 8.7]{NZM}) that, for all integers $m\geq3$ there exists a prime $p$ such that $m<p<2m$.
Taking $m=2^r-1\geq3$, we obtain a prime satisfying $2^r-1<p<2^{r+1}-2$. Such a prime $p$ is odd and is not a Mersenne prime, and the primes obtained for distinct integers $r\geq2$ are pairwise distinct.
\end{proof}

\begin{proposition}\label{prop:bk}
Let $p$ be an odd prime that is not a Mersenne prime, and let $q$ be any odd prime divisor of $p+1$.
Define $n, i,j,k,\ell,m$ as follows:
\[
n=2pq,\ i=p,\ j=q,\ k=p+q,\ \ell = 2p+1,\ m= n-\ell =2pq-2p-1.
\]
Then all the conditions of Example~$\ref{ex:new}$ hold for $i,j,k,\ell,m$, and in particular $C=\{b,ba^i,ba^j,ba^k\}$ arises in Example~$\ref{ex:new}$ for the group $G=D_{2n}$.
\end{proposition}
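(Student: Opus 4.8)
The proposition is a direct verification that the six integers satisfy conditions (i)--(iii) of Example~\ref{ex:new}, so the plan is to check each condition in turn, keeping careful track of which divisibility fact each one reduces to. Before starting I would record the two structural observations that drive everything. First, since $p$ is an odd prime which is not a Mersenne prime, $p+1$ cannot be a power of $2$ (otherwise $p=2^r-1$ would be a Mersenne prime), so $p+1$ genuinely possesses an odd prime divisor $q$ and the hypothesis is not vacuous. Second, $q\mid(p+1)$ together with $p,q$ being odd primes forces $q\ne p$, so $p$ and $q$ are distinct; thus $p\equiv -1\pmod q$ and $n=2pq\geq 18\geq 5$, which secures the standing hypothesis $n\geq5$ of Example~\ref{ex:new}.

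For condition (i) I would observe that $i=p$, $j=q$ and $k=p+q$ all lie strictly between $1$ and $n=2pq$ (using $2pq-(p+q)=p(q-1)+q(p-1)>0$), and that they are pairwise coprime since $\gcd(p,q)=1$, $\gcd(p,p+q)=\gcd(p,q)=1$ and $\gcd(q,p+q)=\gcd(q,p)=1$. For condition (ii), the values $\gcd(i,n)=p>1$ and $\gcd(j,n)=q>1$ are immediate, while for $k$ I would use parity: both $p$ and $q$ are odd, hence $k=p+q$ is even and $2\mid\gcd(k,n)$.

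The bulk of the work is condition (iii) with $\ell=2p+1$ and $m=n-\ell$, and I would verify its clauses in the order in which they simplify. The identity $\ell^2=(2p+1)^2=4p(p+1)+1$ reduces $\ell^2\equiv 1\pmod n$ to $2pq\mid 4p(p+1)$, i.e.\ to $q\mid 2(p+1)$, which holds; and since $m\equiv-\ell\pmod n$ we get $m^2\equiv\ell^2\equiv 1$ for free. The non-triviality clauses follow from size considerations: $\ell\equiv 1$ would force $q\mid 1$, $m\equiv 1$ would force $pq\mid(p+1)$ which is impossible because $pq=p(q-1)+p>p+1$, and $\ell m\equiv-\ell^2\equiv-1\not\equiv 1\pmod n$ since $n>2$. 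The two linking congruences are the real content: $j\ell+i=q(2p+1)+p=n+(p+q)\equiv k$, while $im+j\equiv-p\ell+q=-2p^2-p+q\pmod n$, so $im+j\equiv k$ is equivalent to $2p(p+1)\equiv 0\pmod n$, that is again to $q\mid(p+1)$. Finally $i(\ell+1)=2p(p+1)\equiv 0\pmod n$ reduces once more to $q\mid(p+1)$, and $j(m+1)=q(n-2p)=n(q-1)\equiv 0\pmod n$ is automatic.

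Since every condition of Example~\ref{ex:new} then holds, $C=\{b,ba^i,ba^j,ba^k\}$ arises there for $G=D_{2n}$, completing the proof. I do not expect a serious obstacle: the only conceptual point is recognising that the Mersenne hypothesis is exactly what guarantees the existence of the odd prime $q\mid p+1$, and the only place where genuine care is needed is the non-triviality conditions $\ell,m,\ell m\not\equiv 1\pmod n$, since the defining congruences force $\ell^2\equiv m^2\equiv 1$ but say nothing about inequivalence to $1$; these are disposed of by the elementary estimate $pq>p+1$ and by $n>2$.
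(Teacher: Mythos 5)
Your proof is correct and follows essentially the same route as the paper's: a direct verification of conditions (i)--(iii) of Example~\ref{ex:new}, with each congruence reducing to the single divisibility fact $q \mid p+1$ (e.g.\ $\ell^2-1 = 4p(p+1)$, $j\ell+i = n+k$, and $im+j \equiv -2p^2-p+q \equiv k \pmod{n}$), exactly as in the paper. If anything you are slightly more thorough, since you explicitly check $n\geq 5$, the bounds $1<i,j,k<n$, and the non-degeneracy $\ell, m \not\equiv 1 \pmod{n}$ (via $q\nmid 1$ and $pq > p+1$), points the paper dispatches with ``by definition.''
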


\begin{proof}
Since $p, q$ are distinct odd primes it follows that $i,j,k$ are pairwise coprime, so condition (i) of  Example~$\ref{ex:new}$ holds. Next
\[
\gcd(i,n)=p,\quad \gcd(j,n)=q,\quad \gcd(k,n)=2,
\]
and hence  condition (ii) of  Example~$\ref{ex:new}$ holds. Now we check condition (iii)
of Example~$\ref{ex:new}$. Note that $p$ divides $\ell-1$ and $q$ divides  $2(p+1) = \ell+1$,
and hence, since $\ell$ is odd, $n=2pq$ divides $\ell^2-1$, that is, $\ell^2\equiv 1\pmod{n}$.
Also $m^2\equiv 1\pmod{n}$ since $m\equiv -\ell\pmod{n}$.  By definition, $\ell, m\not\equiv
1\pmod{n}$, and we have $\ell m \equiv -\ell^2 \equiv -1\not\equiv 1\pmod{n}$. Next $i(\ell+1)
=p(2p+2)=2p(p+1)\equiv 0\pmod{n}$ since $q$ divides $p+1$, and $j(m+1)= q(n-2p)\equiv -2pq
\equiv 0\pmod{n}$. Also $j\ell+i=q(2p+1)+p\equiv q+p=k\pmod{n}$. Finally
$im+j=p(n-2p-1)+q\equiv -2p(p+1) +p+q \equiv p+q= k\pmod{n}$, since $n$ divides $2p(p+1)$.
Thus all the conditions of Example~$\ref{ex:new}$ hold, so $C=\{b,ba^i,ba^j,ba^k\}$ arises in Example~$\ref{ex:new}$ for the group $G=D_{2n}$.
\end{proof}

\section{Graphs which are not normal edge-transitive}

For an arbitrary inverse-closed generating set $C$ of a group $G$, it is of course possible to define normal quotients of the Cayley graph $\Cay(G;C)$ relative to $\Aut(G;C)$-invariant normal subgroups of $G$. One may wonder to what extent Theorem~\ref{main decomposition} might extend to this general situation, that is to say, whether or not the assumption that $\Cay(G;C)$ is normal edge-transitive is needed. Here we consider a small example which pin-points one of the problems which may cause the graph isomorphism in  Theorem~\ref{main decomposition}(ii) to fail if  $\Cay(G;C)$ is not normal edge-transitive. Recall that $\caycs$ is the class of connected normal edge-transitive Cayley graphs for finite characteristically simple groups.

\begin{example}\label{the converse is not true}
		Let $G=\mathbb{Z}_2\times \mathbb{Z}_2\times \mathbb{Z}_2\times \mathbb{Z}_2$ so that $\Aut(G)=\GL(4,2)$, and define $C=\{a_1, a_2, b_1, b_2, b_3\}$, where 
\[		
		a_1=(1,0,1,1),\ a_2=(0,1,1,1),\ b_1=(1,0,1,0),\ b_2=(0,1,0,1),\ b_3=(1,1,1,1),
\]
and set $A=\{a_1,a_2\}$ and $B=\{b_1, b_2, b_3\}$, so 		 $C=A\cup B$. Define
\begin{align*}
\tau_1 &: (x,y,z,w)\to (y+z+w, x+z+w, z, w),\\ 
\tau_2 &: (x,y,z,w)\to (x+z+w, x+w, w, x+y),\ \text{and}\\
\tau_3 &:(x,y,z,w)\to (y,x,w,z).  
\end{align*}
\end{example} 

\begin{proposition}\label{lem:converse1}
Let $G, C$ be as in Example~$\ref{the converse is not true}$, and let $A_0=\Aut(G;C)$. Then 
\begin{enumerate}
\item[(a)] $C$ is an inverse-closed generating set for $G$, and $A_0=\langle \tau_1\rangle\times \langle \tau_2, \tau_1\tau_3\rangle \cong \Sym(A)\times\Sym(B)= S_2\times S_3$. In particular $A_0$ fixes each of $A$ and $B$ setwise, and $C\not\in\mathcal{T}(G)$, that is, $C$ is not transitive.

\item[(b)] $\calA_{\max}(G;C)$ contains $M:=\langle A\rangle\cong \mathbb{Z}_2^2$ and $N:=\langle b_1, b_2, a_1+a_2\rangle\cong\mathbb{Z}_2^3$, and  $\Phi(G;C)=M\cap N = \langle a_1+a_2\rangle\cong \mathbb{Z}_2$.

\item[(c)] The graph $\Gamma :=\Cay(G;C)$ is not normal edge-transitive; its normal quotient graphs  
\[
\Gamma_M:=\Cay(G/M,(C+M)/M),\ \Gamma_N:=\Cay(G/N,(C+N)/N)\in \caycs,
\] 
and moreover $\Gamma_M\cong K_4$, $\Gamma_N\cong K_2$, while for $\Phi:=\Phi(G;C)$,  $\Gamma_\Phi:=\Cay(G/\Phi,(C+\Phi)/\Phi)$ is the $4$-valent complement of the cube $Q_3$.

\item[(d)]  The map $g\to (gM, gN)$ induces a group isomorphism $G/\Phi(G;C)\to G/M\times G/N$, but does not induce a graph isomorphism from $\Gamma_\Phi$ to a subgraph of the direct product $\Gamma_M\times \Gamma_N$. For example, a `$b_3$-type' edge such as $\{ 0+\Phi, b_3+\Phi\}$ of $\Gamma_\Phi$ does not map to an edge of $\Gamma_M\times \Gamma_N$ as it projects to a loop of $\Gamma_N$.
\end{enumerate}
\end{proposition}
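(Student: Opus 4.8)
The plan is to exploit the fact that $G=\mathbb{Z}_2^4$ is elementary abelian, so $\Aut(G)=\GL(4,2)$, every subgroup is a normal $\mathbb{F}_2$-subspace, and $\calA(G;C)$ is exactly the set of proper $A_0$-invariant subspaces of $V:=\mathbb{F}_2^4$, with $\calA_{\max}(G;C)$ the maximal ones; thus $\Phi(G;C)$ is the intersection of the maximal $A_0$-submodules of $V$, i.e.\ the module radical. Since every element is self-inverse, inverse-closedness of $C$ is automatic, and one checks $\langle C\rangle=G$ by recovering the standard basis from sums of pairs (for instance $a_1+b_1$ and $a_2+b_2$). Throughout I write $v_0:=a_1+a_2=(1,1,0,0)$. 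The crux of the whole proposition will be part (b): determining $\calA_{\max}(G;C)$ and showing $v_0$ lies in every maximal $A_0$-submodule.

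For part (a), I would evaluate $\tau_1,\tau_2,\tau_3$ on the five elements of $C$ directly from their definitions, verifying that each preserves $C$ and that, as permutations of $C$, $\tau_1=(a_1\,a_2)$ fixing $B$ pointwise, $\tau_2=(b_1\,b_2\,b_3)$ fixing $A$ pointwise, and $\tau_3=(a_1\,a_2)(b_1\,b_2)$; hence $\tau_1\tau_3=(b_1\,b_2)$, and $\langle\tau_1\rangle\times\langle\tau_2,\tau_1\tau_3\rangle$ realises $\Sym(A)\times\Sym(B)\cong S_2\times S_3$ inside $A_0$. For the reverse inclusion I would note that $C$ generates $G$, so $A_0$ embeds in $\Sym(C)$, and that the only nontrivial linear dependence among elements of $C$ is $b_1+b_2+b_3=0$: the $4\times5$ matrix with columns $C$ has rank $4$, so its kernel is $1$-dimensional and, over $\mathbb{F}_2$, is spanned by the single relation with support $\{b_1,b_2,b_3\}$. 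Any $\sigma\in A_0$ carries this relation to another zero-sum triple in $C$, which must again be $\{b_1,b_2,b_3\}$; hence $\sigma$ fixes $B$, and so $A$, setwise, giving $A_0\le\Sym(A)\times\Sym(B)$ and therefore equality. Since the two orbits $A,B$ have unequal sizes $2,3$ and all elements are self-inverse, neither alternative in Definition~\ref{def:trans} can hold, so $C\notin\mathcal{T}(G)$.

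For part (b), observe that $A_0$ fixes $v_0$ (each $\tau_i$ does) and permutes $B$, so $M=\langle A\rangle$ and $N=\langle b_1,b_2,v_0\rangle$ are $A_0$-invariant subspaces of dimensions $2$ and $3$, with $M\cap N=\langle v_0\rangle$ by a short row reduction. As $N$ is a hyperplane it is automatically a maximal proper subspace, so $N\in\calA_{\max}(G;C)$. For $M$ I would identify $V/M$ with the span of $\bar b_1,\bar b_2,\bar b_3$, the three nonzero vectors of a $2$-dimensional space satisfying $\bar b_1+\bar b_2+\bar b_3=0$; the $S_3=\GL(2,2)$ factor of $A_0$ permutes them transitively, so $V/M$ is the irreducible natural module and $M$ is maximal. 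Hence $\Phi(G;C)\subseteq M\cap N=\langle v_0\rangle$. For the reverse, take $P\in\calA_{\max}(G;C)$ and suppose $v_0\notin P$; then maximality forces $P+\langle v_0\rangle=V$, so $P$ is a hyperplane and $M\cap P$ is a $1$-dimensional $A_0$-submodule of $M$. But the unique $1$-dimensional $A_0$-submodule of $M$ is the $\tau_1$-fixed line $\langle v_0\rangle$, forcing $v_0\in M\cap P\subseteq P$, a contradiction. The delicate point here is exactly this last step: it hinges on $M$ being an indecomposable (non-semisimple) module with unique minimal submodule $\langle v_0\rangle$, which is what rules out a maximal submodule complementary to $\langle v_0\rangle$. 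This gives $v_0\in\Phi(G;C)$, whence $\Phi(G;C)=\langle v_0\rangle\cong\mathbb{Z}_2$.

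For parts (c) and (d), non-normal-edge-transitivity of $\Gamma$ is immediate from part (a) and \cite[Proposition 1(b)]{praeger}. I would then compute the three quotient connection sets, discarding images landing on the identity: modulo $M$ the elements $a_1,a_2$ vanish while $\bar b_1,\bar b_2,\bar b_3$ exhaust $G/M\setminus\{0\}$, giving $\Gamma_M\cong K_4$; modulo $N$ the set $B$ vanishes while $a_1,a_2$ map to the nonzero element of $G/N\cong\mathbb{Z}_2$, giving $\Gamma_N\cong K_2$; both are normal edge-transitive Cayley graphs of characteristically simple groups and so lie in $\caycs$. Modulo $\Phi=\langle v_0\rangle$, choosing coset representatives with second coordinate $0$ identifies $G/\Phi$ with $\mathbb{Z}_2^3$ and sends $C$ to $\{(1,1,1),(1,1,0),(1,0,1),(0,1,1)\}$, the set of all vectors of weight $\ge2$; this is the complement of the weight-one set $\{e_1,e_2,e_3\}$ defining $Q_3$, so $\Gamma_\Phi$ is the $4$-valent complement of the cube. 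Finally, since $M+N=G$ and $M\cap N=\Phi$, the map $g\mapsto(gM,gN)$ is the asserted group isomorphism $G/\Phi\to G/M\times G/N$; but it is not a graph map onto a subgraph of $\Gamma_M\times\Gamma_N$, because the edge $\{0+\Phi,\,b_3+\Phi\}$ of $\Gamma_\Phi$ maps to $\{(0,0),(\bar b_3,0)\}$, whose second coordinates coincide, so it projects to a loop in $\Gamma_N$ (as $b_3\in N$) and is not an edge of the direct product — precisely the phenomenon that transitivity of $C$ would exclude.
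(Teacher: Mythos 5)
Your proposal is correct, and in the two places where real work is needed it takes a genuinely different route from the paper. In part (a), after the common verification that $\tau_1,\tau_2,\tau_3$ realise $\Sym(A)\times\Sym(B)$ inside $A_0$, the paper proves $A_0$ is no larger by quoting that $S_2\times S_3$ is maximal in $\Sym(C)\cong S_5$, so that otherwise $A_0=\Sym(C)$, and then refuting a specific permutation by linearity; you instead observe that the $4\times 5$ matrix with columns $C$ has one-dimensional kernel, so $b_1+b_2+b_3=0$ is the unique nontrivial relation in $C$, and any $\sigma\in A_0\leq \GL(4,2)$ must preserve its support $B$ -- a cleaner, intrinsic argument that avoids the permutation-group fact. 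In part (b), which is the crux, the paper shows $a_1+a_2\in\Phi(G;C)$ by verifying the non-generator criterion of Theorem~\ref{thm:frat}(b): assuming the normal $C$-closure $Y$ of $X$ generates a proper subgroup $W$ with $G=W+\langle a_1+a_2\rangle$, it derives $a_2\in W$, hence $a_1=a_2^{\tau_1}\in W$, hence $a_1+a_2\in W$, a contradiction. You work directly from the definition \eqref{eq:phi}, viewing $\calA(G;C)$ as the lattice of proper $A_0$-submodules of $V=\mathbb{F}_2^4$ and $\Phi(G;C)$ as the radical: if $P\in\calA_{\max}(G;C)$ avoided the $A_0$-fixed line $\langle v_0\rangle$, maximality would force $P$ to be a hyperplane, and then $M\cap P$ would be an $A_0$-invariant line of $M$ other than $\langle v_0\rangle$, impossible since $\tau_1$ swaps $\langle a_1\rangle$ and $\langle a_2\rangle$. (You silently pass over the case $M\subseteq P$, but it collapses instantly since $v_0\in M$, so nothing is lost.) Your module-theoretic route is shorter and arguably more illuminating -- it exhibits $\Phi(G;C)$ as the Jacobson radical and locates the obstruction in the indecomposability of $M$ -- whereas the paper's route has the expository virtue of showing Theorem~\ref{thm:frat}(b) in action. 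Parts (c) and (d) match the paper's proof essentially step for step, except that you make the identification of $\Gamma_\Phi$ with the complement of $Q_3$ fully explicit via coset representatives (the paper leaves this as ``not difficult to check''), and you omit the paper's closing valency remark ($4$ versus $3$), which is redundant given the exhibited bad edge.
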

	 
\begin{proof}
(a) The first assertion follows since each element of $G$ is equal to its inverse, and since $C$ spans $G$ as a vector space. Next, to simplify our later arguments, we check directly from their definitions that 
\begin{align*}
\tau_1 &: a_1\leftrightarrow a_2,\ b_1\to b_1,\   b_2\to  b_2,\   b_3\to  b_3\\ 
\tau_2 &:  a_1\to a_1,\  a_2\to a_2,\ b_1\to b_2\to b_3\to b_1\\
\tau_3 &: a_1\leftrightarrow a_2,\ b_1\leftrightarrow b_2,\ b_3\to b_3.   
\end{align*}
In particular each $\tau_i$ maps the spanning set $C$ bijectively to itself and hence each $\tau_i\in\GL(4,2)$. Again, since each $\tau_i$ fixes $C$ setwise (and moreover each $\tau_i$ fixes each of $A$ and $B$ setwise), it follows that each $\tau_i\in A_0$. Now $A_0$ acts faithfully as a permutation group on $C$, and hence it follows easily from the displayed actions of the $\tau_i$ on $C$ that the subgroup they generate is $H =\langle\tau_1\rangle\times \langle \tau_2, \tau_1\tau_3\rangle \cong \Sym(A)\times\Sym(B)= S_2\times S_3$. 
Moreover, this subgroup $H$ is a maximal subgroup of  $\Sym(C)=S_5$. If $A_0$ is strictly larger than $H$, then we would have $A_0\cong \Sym(C)$, and hence $A_0$ would contain an element $\varphi: b_1\to b_1,\   b_2\to  b_2,\   b_3\to a_1\to a_2\to b_3$. However such an element $\varphi$ must in particular lie in $\GL(4,2)$ and hence 
\[
\varphi(b_3)=\varphi(b_1+b_2)=\varphi(b_1)+\varphi(b_2)=b_1+b_2=b_3,
\]
which is a contradiction. Hence $H=A_0$. Thus $A_0$ has orbits in $C$ of lengths $2$ and $3$, and hence  $C\not\in\mathcal{T}(G)$, that is, $C$ is not transitive. This proves part (a).

(b) Let $M_B=\langle B\rangle = \{0,b_1, b_2, b_3\}$. Then $G=M+M_B$ and $M\cap M_B=0$. In particular each nontrivial coset of $M$ in $G$ is of the form $b_i+M$ for some $i\leq 3$. Thus if $K$ is an $A_0$-invariant normal subgroup of $G$ properly containing $M$ then $K=G$ since $A_0$ is transitive on $B$ by part (a). It follows that $M\in \calA_{\max}(G;C)$.  Next we deal with $N$. We see by its definition that $N=\{(x+z, y+z, x,y)\mid x,y,z\in\mathbf{F}_2\}$. It follows from the actions of the $\tau_i$ displayed above that $\tau_1$ fixes $N$ pointwise, and that both $\tau_2$ and $\tau_3$ leave $N$ invariant. Hence $N$ is an $A_0$-invariant normal subgroup of $G$, and as $N$ is maximal in $G$ we conclude that $N\in \calA_{\max}(G;C)$. We note that $M\cap N=\langle a_1+a_2\rangle = \{0, (1,1,0,0)\}$ since $M\cap M_B=0$.  Hence, $\Phi(G;C)\leq 
M\cap N$, by \eqref{eq:phi}, so to complete the proof of part (b) it remains for us to show that $a:=a_1+a_2=(1,1,0,0)\in\Phi(G;C)$. 

To do this, we need to check that the criterion of Theorem~\ref{thm:frat}(b) holds for $a$. So suppose that $X\subseteq G$ and that the normal $C$-closure of $X\cup \{a\}$ generates $G$, that is, $G$ is generated by 
$\widehat{X}:=\{ (x^g)^\sigma | x\in X\cup\{a\}, g\in G, \sigma\in A_0\}$. Since $G$ is abelian, and since $a=a_1+a_2$ is fixed by each element of $A_0$ (by part (a)), $\widehat{X}=\{a\}\cup Y$, where $Y=\{(x^g)^\sigma|x\in X, g\in G, \sigma\in A_0\}$, that is, $Y$ is the normal $C$-closure of $X$. We therefore need to prove that $Y$ generates $G$. Suppose to the contrary that $W:=\langle Y\rangle$ is a proper subgroup of $G$. Since $Y\cup\{a\}$ generates $G$ it follows that $G=W + \langle a\rangle$ and $W\cap \langle a\rangle = 0$. In particular there exists $w\in W$ such that $w+a=a_1$. This means that $W$ contains $w=a+a_1=a_2$, and since $Y$ is $C$-closed and generates $W$, also $a_1=a_2^{\tau_1}$ lies in $W$, and therefore $a=a_1+a_2\in W$, which is a contradiction. Thus we have proved that $Y$ generates $G$, and hence that the criterion of  Theorem~\ref{thm:frat}(b) holds for $a$.  Therefore $a:=(1,1,0,0)\in\Phi(G;C)$.  and part (b) is proved.

(c) By \cite[Proposition 1(c)]{praeger}, the graph $\Gamma :=\Cay(G;C)$ is not normal edge-transitive since the set $C$ is not transitive (by part (a)). Since $(C+M)/M=(B+M)/M$ (as $A\subset M$), and since $A_0$ is transitive on $B$ by part (a), it follows that $\Gamma_M$ is normal edge-transitive (by \cite[Proposition 1(c)]{praeger}), and hence $\Gamma_M\in\caycs$. Similarly $\Gamma_N\in\caycs$ since $(C+N)/N=(A+N)/N$. Now $\Gamma_M\cong K_4$ and $\Gamma_N\cong K_2$, since $(C+M)/M$ and $(C+N)/N$ consist of all the nontrivial cosets of $G/M$ and $G/N$, respectively. On the other hand, $(C+\Phi)/\Phi$ has size 4, since $a_1+\Phi=a_2+\Phi$ and this element is distinct from each of the $b_i+\Phi$, which themselves are pairwise distinct. Thus $\Gamma_\Phi$ has valency $4$, and it is not difficult to check that $\Gamma_\Phi$  is the complement of the $3$-dimensional cube graph $Q_3$.  

(d) The map $\phi:g\to (gM, gN)$ is a group homomorphism $\phi:G\to G/M\times G/N$ with kernel $M\cap N=\Phi(G;C)$ (by part (b)). Thus the image of $\phi$ is isomorphic to $G/\Phi(G;C)=G/(M\cap N)\cong \mathbb{Z}_2^3$. Since $G/M\cong \mathbb{Z}_2^2$ and $G/N\cong\mathbb{Z}_2$, it follows that $\phi$ is onto and $\phi$ induces a group isomorphism $G/\Phi(G;C)\cong G/M\times G/N$. This bijection on vertices does not map all edges of $\Gamma_\Phi$ to edges of the direct product $\Gamma_M\times\Gamma_N$, as a `$b_3$-type'  edge $\{x+\Phi, x+b_3+\Phi\}$ is mapped to the vertex-pair $\{(x+M, x+N),  (x+b_3+M, x+N)\}$ (since $b_3\in N$), and this is not an edge of the direct product $\Gamma_M\times\Gamma_N$. Put more simply, 
$\Gamma_\Phi$ has valency $4$ and so is not isomorphic to a subgraph of the valency $3$ direct product graph $\Gamma_M\times\Gamma_N$.
\end{proof}

\end{document}